\theoremstyle{plain}
\newtheorem{theorem}{Theorem}[section]
\newtheorem{prop}[theorem]{Proposition}
\newtheorem{lemma}[theorem]{Lemma}
\newtheorem{definition}[theorem]{Definition}
\newtheorem{cor}[theorem]{Corollary}
\theoremstyle{remark}
\newtheorem{remark}[theorem]{Remark}
\newtheorem{example}[theorem]{Example}
\def\co{\colon\thinspace}
\def\ep{\epsilon}
\begin{document}
\begin{abstract}
Using the Oh--Schwarz spectral invariants and some arguments of Frauenfelder, Ginzburg, and Schlenk, we show that the $\pi_1$-sensitive Hofer--Zehnder capacity of any subset of a closed symplectic manifold is less than or equal to its displacement energy.  This estimate is sharp, and implies some new extensions of the Non-Squeezing Theorem. 
\end{abstract}
\title{The sharp energy-capacity inequality}
\author{Michael Usher}
\address{Department of Mathematics, University of Georgia, Athens, GA 30602}
\email{usher@math.uga.edu}
\maketitle
\section{Introduction}
Let $(M,\omega)$ be a symplectic manifold.  Any compactly supported smooth function $H\co (\mathbb{R}/\mathbb{Z})\times M\to\mathbb{R}$ (with support contained in $(\mathbb{R}/\mathbb{Z})\times (M\setminus \partial M)$ if $\partial M\neq \varnothing$) then induces a flow $\phi_{H}^{t}\co M\to M$ ($0\leq t\leq 1$), obtained by integrating the time-dependent vector field $X_H$ defined by $d(H(t,\cdot))=\iota_{X_H}\omega$.   The diffeomorphisms $\phi_{H}^{t}$ obtained in this manner are known as Hamiltonian symplectomorphisms; these form a subgroup, denoted $Ham^c(M,\omega)$, of the group of symplectomorphisms of $(M,\omega)$.  

If $A\subset M$ is any subset, there are various ways of measuring some version of the ``symplectic size'' of $A$ by considering how $A$ interacts with elements of $Ham^c(M,\omega)$.  Two of these, the \emph{Hofer--Zehnder capacity} $c_{HZ}(A)$ and its sibling the $\pi_1$-sensitive Hofer--Zehnder capacity $c_{HZ}^{\circ}(A,M)$, are defined as follows.  Let \[ \mathcal{H}(A)=\left\{H\in C^{\infty}(M)\left|\begin{array}{ll} supp(H)\Subset A\setminus \partial M, \,\,0\leq H\leq \max H,\, H^{-1}(\{0\})\mbox{ and }\\H^{-1}(\{\max H\})\mbox{ both contain nonempty open sets}\end{array}\right.\right\}.\] (The symbol $\Subset$ means ``is contained in a compact subset of.'') Call $H\in \mathcal{H}(A)$ \emph{HZ-admissible} if the Hamiltonian flow $\phi_{H}^{t}$ of the $t$-independent extension of $H$ to a function on $(\mathbb{R}/\mathbb{Z})\times M$ has no nonconstant periodic orbits of period at most $1$.  
Similarly, call $H\in \mathcal{H}(A)$ \emph{HZ$^{\circ}$-admissible} if $\phi_{H}^{t}$ has no nonconstant
periodic orbits of period at most $1$ which are contractible in $M$.  Now define \[ c_{HZ}(A)=\sup\{\max H|H\in \mathcal{H}(A)\mbox{ is HZ-admissible}\}\] and \[ c_{HZ}^{\circ}(A,M)=\sup\{\max H|H\in \mathcal{H}(A)\mbox{ is HZ$^{\circ}$-admissible}\}.\]  The notation is consistent with the fact that $c_{HZ}(A)$ depends only on $(A,\omega|_A)$ (and on $\dim M$), whereas 
whether an orbit of $\phi_{H}^{t}$ is contractible in $M$ or not of course depends on $M$, and therefore $c_{HZ}^{\circ}(A,M)$ depends on the ambient manifold $M$ and not just on $A$.  Obviously any HZ-admissible Hamiltonian is HZ$^{\circ}$-admissible, so \[ c_{HZ}(A)\leq c_{HZ}^{\circ}(A,M).\]

Hofer and Zehnder showed in \cite{HZ} that $c_{HZ}$ is a symplectic capacity in what is now the usual sense of the term (see, \emph{e.g.}, Chapter 2 of \cite{HZbook}), and that, where $B^{2n}(r)$ is the ball of radius $r$ in $\mathbb{R}^{2n}$, one has $c_{HZ}(B^{2n}(r))=\pi r^2$.  As a more or less immediate consequence of this latter fact together with the naturality of the definition of $c_{HZ}$, one has \[ w_G(A)\leq c_{HZ}(A)\] for any $A$, where the \emph{Gromov width} $w_G(A)$ is defined as the supremum of the numbers $\pi r^2$ over all $r$ with the property that the ball $B^{2n}(r)$ embeds symplectically in $A$.

The other way of measuring the size of $A$ using $Ham^c(M,\omega)$ that we shall discuss is by means of the \emph{displacement energy} $e(A,M)$.  To define $e(A,M)$, we recall first the definition of the Hofer norm of a Hamiltonian symplectomorphism $\phi\in Ham^c(M,\Omega)$ \cite{H}.  Introduce the notation $H\mapsto \phi$ to signify that a compactly supported smooth function $H\co (\mathbb{R}/\mathbb{Z})\times M\to \mathbb{R}$ has the property that $\phi_{H}^{1}=\phi$.  The Hofer norm of $\phi$ is then \[ \|\phi\|=\inf_{H\mapsto \phi}\left\{\int_{0}^{1}\left(\max_{p\in M}H(t,p)-\min_{p\in M}H(t,p)\right)dt\right\}.\]
It is a subtle fact, proven in \cite{H} for $M=\mathbb{R}^{2n}$ and in \cite{LM} for an arbitrary symplectic manifold $(M,\omega)$, that $\|\phi\|$ is positive unless $\phi$ is the identity map.  Now we can define the displacement energy of $A$ (in $M$) as \[ e(A,M)=\inf\{\|\phi\| |\phi(A)\cap A=\varnothing\}\] if $A$ is compact and as \[ e(A,M)=\sup\{e(K,M)|K\Subset A\}\] for general $A$.  

We can now state the main result of this paper.
\begin{theorem}\label{main} If $(M,\omega)$ is closed then for any $A\subset M$ we have \[ c_{HZ}^{\circ}(A,M)\leq e(A,M).\]
\end{theorem}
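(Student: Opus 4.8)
The plan is to reduce the inequality to an estimate for spectral invariants and then run the Frauenfelder--Ginzburg--Schlenk ``action selector'' argument, using the Oh--Schwarz invariants --- available on any closed $(M,\omega)$ --- as the selector. First I would reduce to a numerical statement. Since $c_{HZ}^{\circ}(A,M)$ is by definition the supremum of $\max H$ over the (automatically $t$-independent) HZ$^{\circ}$-admissible $H\in\mathcal{H}(A)$, and since for each such $H$ the support $K:=supp(H)$ is a compact subset of $A$ with $e(A,M)\geq e(K,M)$, it suffices to show that $\max H\leq\|\phi\|$ whenever $\phi\in Ham^c(M,\omega)$ satisfies $\phi(K)\cap K=\varnothing$ (if no such $\phi$ exists then $e(K,M)=+\infty$ and there is nothing to prove). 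Replacing each Hamiltonian by its mean-zero normalization alters neither the time-one maps, nor $\|\phi\|$, nor $\max H-\min H$ (which equals $\max H$, since $\min H=0$ for $H\in\mathcal{H}(A)$), so from now on all Hamiltonians are normalized.

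Let $c(a,\cdot)$, $a\in QH_*(M)$, denote the Oh--Schwarz spectral invariants, and $\gamma(\psi)=c([M],\psi)+c([M],\psi^{-1})$ the associated spectral norm: it is nonnegative, symmetric under $\psi\mapsto\psi^{-1}$, conjugation-invariant, subadditive, $\gamma(\mathrm{id})=0$, and $\gamma(\psi)\leq\|\psi\|$. \textbf{Step 1.} If $H\in\mathcal{H}(A)$ is HZ$^{\circ}$-admissible then $\gamma(\phi_{H}^{1})\geq\max H$ (in fact equality holds, but only ``$\geq$'' is needed; the reverse is the trivial bound $\gamma(\phi_H^1)\le\|\phi_H^1\|\le\max H$). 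The decisive point is that the Oh--Schwarz invariants only involve \emph{contractible} $1$-periodic orbits, so HZ$^{\circ}$-admissibility --- not the stronger HZ-admissibility --- is exactly what is needed: it forces the contractible $1$-periodic orbits of $H$ to be constants at critical points, and hence the action spectrum of the normalization $\tilde H=H-c$ (with $0<c<\max H$) to be $\{-\tilde H(p):p\in Crit(H)\}-\langle\omega,\pi_2(M)\rangle$; in particular this set contains the values $c$ and $c-\max H$ attached to the nonempty open sets on which $H$ equals $0$ and $\max H$. Moreover every rescaling $tH$ with $0\le t\le1$ is again HZ$^{\circ}$-admissible, its time-$s$ flow being $\phi_{H}^{ts}$, so one can homotope along $t\mapsto t\tilde H$ and combine continuity of $c([M],\cdot)$ and $c([pt],\cdot)$, spectrality, the normalization $c([M],0)=c([pt],0)=0$, and the standard values of the invariants for the $C^2$-small Hamiltonians $t\tilde H$ ($t$ small), where $c([M],\cdot)$ is attached to the minimum and $c([pt],\cdot)$ to the maximum of the Hamiltonian, to conclude that $c([M],H)=c$ and $c([pt],H)=c-\max H$; by the duality $c([M],\psi^{-1})=-c([pt],\psi)$ this gives $\gamma(\phi_{H}^{1})=\max H$.

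\textbf{Step 2.} If $\phi$ displaces $K=supp(H)$ then $\gamma(\phi_{H}^{1})\leq\|\phi\|$. Since $\phi$ also displaces $supp(\phi_{H}^{1})\subseteq K$, the symplectomorphisms $\phi_{H}^{1}$ and $\phi\,\phi_{H}^{1}\,\phi^{-1}$ have disjoint supports and hence commute; inserting this, together with conjugation-invariance and subadditivity of $\gamma$ and the elementary estimates $c([M],F)\leq\int_{0}^{1}\max_{M}F_{t}\,dt$ and $c([M],\bar F)\leq-\int_{0}^{1}\min_{M}F_{t}\,dt$ valid for every $F\mapsto\phi$, into the Frauenfelder--Ginzburg--Schlenk displacement argument yields $\gamma(\phi_{H}^{1})\leq\|\phi\|$. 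Chaining Steps 1 and 2 gives $\max H\leq\|\phi\|$, which by the first paragraph proves $c_{HZ}^{\circ}(A,M)\leq e(A,M)$.

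The step I expect to be the main obstacle is Step 1 --- pinning down which critical value of the action functional each of $c([M],\cdot)$ and $c([pt],\cdot)$ computes for a Hamiltonian that is only HZ$^{\circ}$-admissible rather than $C^2$-small. The constant orbits at the extrema of $H$ are highly degenerate, so the Floer complex is not literally a Morse complex and one cannot perturb $H$ freely without risking the creation of short contractible periodic orbits; and when $\langle\omega,\pi_2(M)\rangle$ is dense the action spectrum is dense, so one cannot simply argue that the spectral invariants are locally constant along a homotopy. What rescues the argument is the rescaling $tH$: the ``Hamiltonian part'' $t\{-\tilde H(p)\}$ of the action spectrum scales with $t$ while the Novikov part $\langle\omega,\pi_2(M)\rangle$ does not, which makes it possible to isolate and follow the correct branch as $t$ runs from $0$ to $1$; turning this into a rigorous computation, and correctly matching $c([M],\cdot)$ to the minimum and $c([pt],\cdot)$ to the maximum of $H$, is where the real work lies. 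Step 2 is essentially the FGS argument and should go through once the spectral invariants and their formal properties are in hand. (That $M$ is closed is used throughout: it underlies both the construction and the stated properties of the Oh--Schwarz invariants, and it makes all relevant supports compact.)
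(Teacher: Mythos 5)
Your overall strategy --- treating the Oh--Schwarz invariants as an action selector and running the Frauenfelder--Ginzburg--Schlenk displacement machinery --- is exactly the paper's. Your Step 2 is a restatement (with $\gamma(\phi_H^1)$ in place of $\rho(\bar H;1)$) of what the paper proves as Proposition~\ref{kkbar} and Corollary~\ref{dispcor}, and this step is fine in outline. The genuine gap is in Step 1, the computation $c([M],H)=c$, $c([pt],H)=c-\max H$ for an HZ$^{\circ}$-admissible $H$, and you yourself flag it as ``where the real work lies'' --- but your proposed rescue (rescaling $tH$ and ``following the correct branch'') cannot be made to work. The obstruction is precisely the one the paper isolates: spectrality, the property $\rho(G;1)\in\Sigma(G)$, is only established for \emph{nondegenerate} $G$ (Theorem~\ref{ohbackground}(v)), and for degenerate $G$ on a spherically irrational $(M,\omega)$ it was open at the time of writing. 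Every $tH$ in your family is maximally degenerate (the level sets $H^{-1}(\{0\})$ and $H^{-1}(\{\max H\})$ contain open sets of critical points), so you have no right to assume $c([M],tH)$ lies in the action spectrum along the whole homotopy, and without that the branch-following argument has no foundation. Even granting spectrality, ``following the branch'' is delicate when $\langle\omega,\pi_2(M)\rangle$ is dense, because the lines $t\mapsto -t\tilde H(p)+\nu$ for different $(p,\nu)$ cross on a dense set of times; a Lipschitz selection can jump branches at a crossing without violating continuity.

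What closes the gap in the paper is a different idea which does not appear in your sketch at all: approximate the slow Hamiltonian $H$ in $C^0$ by Hamiltonians $K$ that are simultaneously slow, flat, and Morse (Theorem~\ref{approx}), using a careful reparametrization-plus-cutoff construction controlled by the Yorke bound on periods of vector fields. Slow, flat, Morse Hamiltonians are Floer-nondegenerate, so Oh's Theorem~IV (quoted as Theorem~\ref{ohlength}) applies directly to give $\rho(K;1)=-\min K$, and then Lipschitz continuity of $\rho(\cdot;1)$ transfers this to the degenerate $H$, yielding Proposition~\ref{aut}. This approximation theorem is the paper's main new technical ingredient, and it is exactly the piece your proposal is missing. (A secondary, lesser issue: you phrase Step 1 as a lower bound on the \emph{group-level} spectral norm $\gamma(\phi_H^1)$, which is an infimum over all generating Hamiltonians, including ones whose paths are not homotopic to $\phi_H^t$; this adds a $\pi_1(Ham)$ ambiguity. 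The paper sidesteps this by working at the path level, bounding $\rho(-H;1)$ directly and only taking the infimum over the generators of the \emph{displacing} map.)
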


It is easy to see that the above inequality is sharp, since, writing $2n=\dim M$, if $M$ contains a Darboux ball $U$ of radius $3r$ then for any $\ep>0$ it is straightforward to explicitly construct $\phi\in Ham^c(U,\omega|_U)$ which displaces a copy of $B^{2n}(r)$ and has energy at most $\pi r^2+\ep$; thus $e(B^{2n}(r),M)=c_{HZ}^{\circ}(B^{2n}(r),M)=\pi r^2$.

Theorem \ref{main} was proven in Chapter 5 of \cite{HZbook}  with the closed manifold $(M,\omega)$ replaced by $\mathbb{R}^{2n}$ with its standard symplectic structure (and earlier in \cite{H} with $M=\mathbb{R}^{2n}$ and $c_{HZ}$  replaced by the Ekeland-Hofer capacity), and is also known for $(M,\omega)$ equal to an (open) symplectically aspherical convex manifold (see Remark 1.7(3)(i) of \cite{FGS}).     A weaker inequality $c_{HZ}^{\circ}(A,M)\leq 4e(A,M)$ was proven in \cite{Sch} for all tame $(M,\omega)$, and in \cite{LM} it is shown that $w_G(A)\leq 2e(A,M)$ for \emph{any} symplectic manifold $(M,\omega)$.  

Given the results mentioned in the previous paragraph in which $M$ is taken to be noncompact, it is natural to wonder if Theorem \ref{main} can be extended to noncompact symplectic manifolds.  In view of the constructions of \cite{FS}, it seems plausible that our proof could be extended to any $M$ which is symplectomorphic outside a compact set to the complement of a compact set in a product of manifolds each of which is either closed or convex, the point being that in such a context there is a maximum principle which ensures that Hamiltonian Floer theory is well-behaved for appropriately chosen Hamiltonians.  However, an extension to all tame symplectic manifolds would almost certainly require fundamentally different methods.  

Note that Theorem \ref{main} quickly implies at least a few analogues of itself in which $M$ is replaced by a noncompact manifold.  Let us say that the $2n$-dimensional symplectic manifold $(M,\omega)$ is of Type (C) if $M$ can be written as $\cup_{i=1}^{\infty}K_i$ where $\{K_i\}_{i=1}^{\infty}$ is an increasing sequence of codimension-zero compact sets with the property that, for each $i$, there exists a closed $2n$-dimensional symplectic manifold $(X_i,\omega_i)$ into which $K_i$ symplectically embeds.  If additionally the embeddings $\iota_i\co K_i\hookrightarrow X_i$ may be arranged to have the property that $\ker(\iota_{i*}\co \pi_1(K_i)\to \pi_1(X_i))$ is contained in $\ker(\pi_1(K_i)\to \pi_1(M))$, let us say that $(M,\omega)$ is of Type (C$^{\circ}$).  A moment's thought about the definitions of $c_{HZ}^{\circ}$ and $e$ shows then that (given Theorem \ref{main} for closed manifolds) Theorem \ref{main} holds equally well for any $(M,\omega)$ which is of Type (C$^{\circ}$), and also that if $(M,\omega)$ is only of Type (C) then one still has an inequality $c_{HZ}(A)\leq e(A,M)$ for subsets $A\subset M$.  Obviously,  $\mathbb{R}^{2n}$ and $T^*S^1$ are of Type (C$^{\circ}$)
(in the first case take $K_i=[-i,i]^{2n}$ and $X_i=\mathbb{R}^{2n}/(3i\mathbb{Z})^{2n}$, and in the second take $K_i=S^1\times [-i,i]$ and $X_i=\mathbb{R}^2/(\mathbb{Z}\oplus 3i\mathbb{Z})$).  More interestingly, Theorem 3.2 of \cite{LiMa} shows that every Stein manifold is of Type (C), and any convex symplectic $4$-manifold is of Type (C) by a famous theorem of Eliashberg and Etnyre \cite{El}, \cite{Et}.  This author does not know whether every symplectic manifold of Type (C) is also of Type (C$^{\circ}$). Of course, the class of manifolds of Type (C) (and likewise that of manifolds of Type (C$^{\circ}$)) is closed under Cartesian products, so we have for instance:
\begin{cor} \label{typec} If $(M,\omega)$ is the Cartesian product of any finite collection of closed symplectic manifolds, Stein manifolds, and/or convex symplectic $4$-manifolds (or more generally if $(M,\omega)$ is any symplectic manifold of Type (C)), then the inequality \[ c_{HZ}(A)\leq e(A,M)\] holds for all 
subsets $A\subset M$.
\end{cor}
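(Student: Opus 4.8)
The plan is to reduce the corollary to the single assertion that $c_{HZ}(A)\le e(A,M)$ holds for every subset $A$ of a symplectic manifold $(M,\omega)$ of Type (C), and then to prove that assertion by transplanting the relevant Hamiltonian dynamics into a closed symplectic manifold, where Theorem \ref{main} applies. The reduction is purely formal. A closed symplectic manifold is trivially of Type (C) (take $K_1=X_1=M$); every Stein manifold is of Type (C) by Theorem 3.2 of \cite{LiMa}; every convex symplectic $4$-manifold is of Type (C) by the theorem of Eliashberg and Etnyre (\cite{El}, \cite{Et}); and if $M'=\bigcup_i K_i'$ and $M''=\bigcup_i K_i''$ witness $M'$ and $M''$ as Type (C) via symplectic embeddings $\iota_i'\co K_i'\hookrightarrow X_i'$, $\iota_i''\co K_i''\hookrightarrow X_i''$ into closed manifolds, then the codimension-zero compact sets $K_i'\times K_i''$ increase to $M'\times M''$ and embed symplectically into the closed manifolds $X_i'\times X_i''$ via $\iota_i'\times\iota_i''$; iterating, a finite product of Type (C) manifolds is of Type (C). So every manifold named in the statement is of Type (C), and it remains to handle that case.

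Suppose then that $(M,\omega)=\bigcup_i K_i$ is of Type (C), with symplectic embeddings $\iota_i\co K_i\hookrightarrow X_i$ into closed symplectic manifolds $(X_i,\omega_i)$, where — as I would arrange, passing to a subsequence and enlarging slightly if necessary — the exhaustion is such that every compact subset of $M$ lies in $\operatorname{int}K_j$ for all sufficiently large $j$. The first step is to reduce to compact $A$: any HZ-admissible $H\in\mathcal H(A)$ also belongs to $\mathcal H(supp(H))$ and is HZ-admissible there, so $c_{HZ}(A)\le\sup\{c_{HZ}(L)\mid L\Subset A\}$; since also $e(A,M)=\sup\{e(L,M)\mid L\Subset A\}$, it is enough to prove $c_{HZ}(L)\le e(L,M)$ for each compact $L\subset M$, and then chaining the two suprema gives the result for arbitrary $A$.

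So fix a compact $L\subset M$ and $\ep>0$; we may assume $e(L,M)<\infty$. The key step is to build, inside a single closed $X_j$, a Hamiltonian diffeomorphism that displaces a symplectic copy of $L$ and has Hofer norm below $e(L,M)+\ep$. To do this I would first pick $\phi\in Ham^c(M,\omega)$ with $\phi(L)\cap L=\varnothing$, together with a compactly supported $G\co(\mathbb R/\mathbb Z)\times M\to\mathbb R$ generating $\phi$ with $\int_0^1(\max_M G(t,\cdot)-\min_M G(t,\cdot))\,dt<e(L,M)+\ep$; such $G$ exists because the infimum defining the Hofer norm is taken over compactly supported Hamiltonians. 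Now choose $j$ so large that $L$ and the (compact) support of $G$ both lie in $\operatorname{int}K_j$. Pushing $G$ forward by the symplectomorphism $\iota_j\co\operatorname{int}K_j\to\iota_j(\operatorname{int}K_j)$ and extending by zero yields a compactly supported Hamiltonian $\tilde G$ on $X_j$ whose time-one map $\psi\in Ham^c(X_j,\omega_j)$ agrees with $\iota_j\circ\phi\circ\iota_j^{-1}$ on $\iota_j(\operatorname{int}K_j)$ and is the identity elsewhere; since $\phi$ keeps $L$ inside $\operatorname{int}K_j$ and displaces it there, $\psi$ displaces $\iota_j(L)$ in $X_j$, while $\tilde G$ and $G$ have the same range of values at each time so that $\|\psi\|_{X_j}\le\int_0^1(\max_M G(t,\cdot)-\min_M G(t,\cdot))\,dt<e(L,M)+\ep$. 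Finally, invoking the naturality of $c_{HZ}$ (so $c_{HZ}(L)=c_{HZ}(\iota_j(L))$, as $\dim X_j=\dim M$), the trivial inequality $c_{HZ}\le c_{HZ}^{\circ}$, Theorem \ref{main} for the closed manifold $X_j$, and the displacement bound, I would conclude
\[ c_{HZ}(L)=c_{HZ}(\iota_j(L))\le c_{HZ}^{\circ}(\iota_j(L),X_j)\le e(\iota_j(L),X_j)\le\|\psi\|_{X_j}<e(L,M)+\ep,\]
and letting $\ep\to0$ completes the proof.

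I expect the only step needing real care to be the localization: one must know that $\phi$ can be generated by a Hamiltonian whose \emph{support} — not merely whose time-one map — lies in a single $K_j$, and this is precisely why it matters that the Hofer norm is defined through compactly supported Hamiltonians, whose supports are eventually absorbed by the exhaustion. Everything else amounts to keeping track of which ambient manifold each capacity or displacement energy is computed in, together with the minor and routine point — which the paper evidently regards as such — that the exhaustion $\{K_i\}$ may be taken so that its interiors cover $M$ (e.g. by slightly enlarging each $K_i$ and extending $\iota_i$ accordingly).
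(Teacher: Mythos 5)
Your proof is correct and follows the same route the paper has in mind: the paper itself treats the corollary as a near-immediate consequence of Theorem \ref{main} (``a moment's thought about the definitions of $c_{HZ}^{\circ}$ and $e$''), relying precisely on the facts that both $c_{HZ}$ and $e$ are computed from compactly supported data which can be transplanted into a single closed $X_j$, that $c_{HZ}$ is intrinsic to the germ of $(A,\omega|_A)$, and that Type (C) is closed under finite products. You have filled in those details carefully, including the reduction to compact sets and the accounting of the Hofer length of the pushed-forward Hamiltonian, and you correctly flag the only mild technical wrinkle (arranging the exhaustion so its interiors cover $M$), which the paper leaves implicit.
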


We also have the following improvement of the stable energy-capacity inequality of \cite{Sch}:
\begin{cor}\label{stable} If $(M,\omega)$ is of Type (C$^{\circ}$) then, for every $A\subset M$, we have \[ c_{HZ}^{\circ}(A,M)\leq e(A\times S^1,M\times T^*S^1).\]
\end{cor}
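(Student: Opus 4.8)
The plan is to deduce this from Theorem \ref{main} applied not to $M$ but to the stabilized manifold $N:=M\times T^*S^1$. Since $T^*S^1$ is of Type~(C$^{\circ}$) and the class of Type~(C$^{\circ}$) manifolds is closed under Cartesian products, $N$ is itself of Type~(C$^{\circ}$), so Theorem \ref{main} gives $c_{HZ}^{\circ}(B,N)\leq e(B,N)$ for every $B\subseteq N$. One cannot simply stabilize the \emph{set} $A$: if $A$ has nonempty interior then $A\times S^1$ has empty interior in $N$, so $c_{HZ}^{\circ}(A\times S^1,N)=0$ and that route is vacuous. Instead I would stabilize \emph{Hamiltonians}, turning an HZ$^{\circ}$-admissible $H$ on $M$ supported in a subset of $A$ into an HZ$^{\circ}$-admissible Hamiltonian on $N$ supported in a thin neighborhood of a compact subset of $A\times S^1$, with the same maximum.

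Concretely: fix an HZ$^{\circ}$-admissible $H\in\mathcal{H}(A)$ and a compact $K$ with $supp(H)\subseteq K\subseteq A\setminus\partial M$ (one exists since $supp(H)\Subset A\setminus\partial M$), and write $T^*S^1=(\mathbb{R}/\mathbb{Z})_q\times\mathbb{R}_p$, with symplectic form $dp\wedge dq$ and zero section $\{p=0\}\cong S^1$. For a bump function $\rho\co\mathbb{R}\to[0,1]$ supported in a small interval $(-\delta,\delta)$ and identically $1$ near $0$, set $\tilde H(m,q,p)=\rho(p)H(m)$. Then $\max\tilde H=\max H$, $\tilde H\geq 0$, the sets $\tilde H^{-1}(0)$ and $\tilde H^{-1}(\max H)$ both contain nonempty open subsets of $N$, and $supp(\tilde H)\subseteq K\times\big(S^1\times(-\delta,\delta)\big)$ lies in any prescribed neighborhood of $K\times S^1$ once $\delta$ is small; in particular $\tilde H\in\mathcal{H}(B)$ for any relatively compact open $B\supseteq supp(\tilde H)$.

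The heart of the argument is checking that $\tilde H$ is HZ$^{\circ}$-admissible \emph{in $N$}. A short computation gives $X_{\tilde H}=\big(\rho(p)X_H,\,-H(m)\rho'(p)\,\partial_q,\,0\big)$ (up to a sign convention), so $p$ is conserved along orbits, the $M$-component of an orbit on a level $\{p=p_0\}$ is the time-reparametrized orbit $\phi_H^{\rho(p_0)t}(m_0)$, and the $q$-component drifts at rate $-\rho'(p_0)H(m(t))$. Because $0\leq\rho\leq 1$, a nonconstant periodic orbit of $\phi_{\tilde H}^t$ of period $\leq 1$ with nonconstant $M$-component produces a nonconstant periodic orbit of $\phi_H^t$ of period $\leq 1$, which is noncontractible in $M$ since $H$ is HZ$^{\circ}$-admissible, so the orbit of $\tilde H$ is noncontractible in $N$; and a nonconstant periodic orbit of period $\leq 1$ with constant $M$-component must wind a nonzero number of times around the $q$-circle, hence is again noncontractible in $N$ because $\pi_1(N)=\pi_1(M)\times\pi_1(S^1)$ and the $S^1$-factor is essential. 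Thus $\phi_{\tilde H}^t$ has no contractible nonconstant periodic orbit of period $\leq 1$. This $\pi_1$-bookkeeping --- making sure the contractibility condition survives stabilization --- is the step I expect to demand the most care, although it is not deep given that the flow preserves the product structure and conserves $p$.

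Finally I would assemble the pieces. Given any $\phi\in Ham^c(N)$ that displaces $K\times S^1$, choose $\delta$ (hence $\rho$, hence $\tilde H$) small enough that $\phi$ displaces a relatively compact open neighborhood $B$ of $supp(\tilde H)$; then $e(B,N)\leq\|\phi\|$, while by the previous paragraph $\tilde H\in\mathcal{H}(B)$ is HZ$^{\circ}$-admissible, so Theorem \ref{main} for $N$ gives $\max H=\max\tilde H\leq c_{HZ}^{\circ}(B,N)\leq e(B,N)\leq\|\phi\|$. Taking the infimum over such $\phi$ yields $\max H\leq e(K\times S^1,N)$, and since $K\times S^1$ is a compact subset of $A\times S^1$ this is at most $e(A\times S^1,N)$; taking the supremum over all HZ$^{\circ}$-admissible $H\in\mathcal{H}(A)$ gives $c_{HZ}^{\circ}(A,M)\leq e(A\times S^1,M\times T^*S^1)$. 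The only remaining points are the routine manipulations of $\Subset$, infima, and suprema in the definitions of $c_{HZ}^{\circ}$ and of $e$ for sets that are not compact or have empty interior.
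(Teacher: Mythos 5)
Your proof is correct and follows essentially the same route as the paper: the paper applies Theorem~\ref{main} to $N=M\times T^*S^1$ and then simply cites the stabilization argument from Section~2.2 of \cite{Sch}, which is precisely the bump-function construction $\tilde H(m,q,p)=\rho(p)H(m)$ and the $\pi_1$-bookkeeping that you carry out explicitly. Your version is just a self-contained rendering of the cited argument, with the quantifier order (choose $\phi$, then $\delta$) handled correctly.
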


\begin{proof}  As noted earlier, $T^*S^1$ is of Type (C$^{\circ}$), so $M\times T^*S^1$ is as well and so Theorem \ref{main} implies that $c_{HZ}^{\circ}(A\times U_{\ep},M\times T^*S^1)\leq e(A\times U_{\ep},M\times T^*S^1)$ for every $\ep>0$, where $U_{\ep}=\{(\theta,v)\in T^*S^1||v|\leq \ep\}$.  The argument given in Section 2.2 of \cite{Sch} then immediately proves the corollary.
\end{proof}

\subsection{Consequences of Theorem \ref{main} for symplectic embedding problems}

Several interesting applications of the stable energy-capacity inequality to Hamiltonian dynamics are given in \cite{Sch}; dynamical applications along those lines generally only rely on using the inequality to show that some set $A$ has $c_{HZ}(A)<\infty$, and obviously our improvement of the inequality to Corollary \ref{stable} is no better for this purpose than the inequality $c_{HZ}^{\circ}(A,M)\leq 4e(A\times S^1,M\times T^*S^1)$ which is proven in \cite{Sch}.

On the other hand, the sharp nature of Theorem \ref{main} does make it useful for establishing various elaborations of the Non-Squeezing Theorem. In this direction, let us make the following two simple observations.

\begin{prop} \label{obvious} Let $(M,\omega)$ be a symplectic manifold and let $A\subset M$.  \begin{itemize}
\item[(i)] If $(P,\Omega)$ is another symplectic manifold then \[ e(A\times P,M\times P)\leq e(A,M).\]
\item[(ii)] If $(P,\Omega)$ is a \emph{closed} symplectic manifold then \[ c_{HZ}^{\circ}(A\times P,M\times P)\geq c_{HZ}^{\circ}(A,M)\] and \[ c_{HZ}(A\times P)\geq c_{HZ}(A).\]\end{itemize}
\end{prop}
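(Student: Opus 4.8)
The plan is to treat both parts by pulling Hamiltonians on $M$ back to $M\times P$ along the projection $p_1\co M\times P\to M$, modifying them slightly to meet the compact-support and admissibility requirements and then comparing the relevant quantities; for brevity write $\mathrm{osc}_N f=\max_N f-\min_N f$.

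For (i) I would first reduce to compact sets. Since $e(A\times P,M\times P)=\sup\{e(C,M\times P)\co C\Subset A\times P\}$ and each compact $C\subset A\times P$ is contained in $K\times L$ with $K=p_1(C)\Subset A$ and $L=p_2(C)\Subset P$, it is enough to prove $e(K\times L,M\times P)\le e(K,M)$; the right-hand side is $\le e(A,M)$ by monotonicity of $e$ under inclusion. Assuming $e(K,M)<\infty$ (otherwise nothing is to prove), fix $\ep>0$, a map $\phi\in Ham^c(M,\omega)$ with $\phi(K)\cap K=\varnothing$, and a Hamiltonian $H$ with $\phi_{H}^{1}=\phi$ and $\int_{0}^{1}\mathrm{osc}_{M}H(t,\cdot)\,dt$ within $2\ep$ of $e(K,M)$. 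Choose a smooth compactly supported $\beta\co P\to[0,1]$ that is identically $1$ on a neighborhood of $L$, and put $G(t,m,p)=\beta(p)H(t,m)$, a compactly supported Hamiltonian on $M\times P$. On $M\times L$ one has $\beta\equiv1$ and $d\beta\equiv0$, so there the Hamiltonian vector field of $G$ equals $(X_{H},0)$; hence points of $M\times L$ remain in $M\times L$ under the flow of $G$ and move there exactly as under $\phi_{H}^{t}\times\mathrm{id}$, so $\phi_{G}^{1}$ displaces $K\times L$. Finally $\mathrm{osc}_{M\times P}G(t,\cdot)\le\mathrm{osc}_{M}H(t,\cdot)$ for each $t$: when $M$ is open a compactly supported $H(t,\cdot)$ takes the value $0$, so its range contains $0$ and multiplication by $\beta\in[0,1]$ cannot enlarge the oscillation, and when $M$ is closed one first subtracts the constant $\min_{M}H(t,\cdot)$ (which alters neither the flow nor the oscillation) to reduce to $H\ge0$. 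Thus $e(K\times L,M\times P)\le\|\phi_{G}^{1}\|\le\int_{0}^{1}\mathrm{osc}_{M}H(t,\cdot)\,dt$, and letting $\ep\to0$ gives the claim.

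For (ii), because $P$ is closed the pullback $\tilde H=H\circ p_1$ of any $H\in\mathcal{H}(A)$ again lies in $\mathcal{H}(A\times P)$ with the same maximum: its support $supp(H)\times P$ is a compact subset of $A\times P$, one has $0\le\tilde H\le\max H$, and $\tilde H^{-1}(\{0\})=H^{-1}(\{0\})\times P$ and $\tilde H^{-1}(\{\max H\})=H^{-1}(\{\max H\})\times P$ each contain a nonempty open set. Since $\tilde H$ is independent of the $P$-coordinate, the flow of its ($t$-independent) extension is $\phi_{H}^{t}\times\mathrm{id}_{P}$, so the nonconstant periodic orbits of period at most $1$ of $\phi_{\tilde H}^{t}$ are precisely the curves $t\mapsto(\phi_{H}^{t}(m),p)$ for which $t\mapsto\phi_{H}^{t}(m)$ is such an orbit of $\phi_{H}^{t}$; moreover, applying $p_1$ to a null-homotopy in $M\times P$ of such a curve produces a null-homotopy in $M$ of the orbit $t\mapsto\phi_{H}^{t}(m)$. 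Consequently HZ-admissibility (resp.\ HZ$^{\circ}$-admissibility) of $H$ implies the same property for $\tilde H$, so that $\max H=\max\tilde H$ is a lower bound for $c_{HZ}(A\times P)$ (resp.\ $c_{HZ}^{\circ}(A\times P,M\times P)$); taking the supremum over admissible $H\in\mathcal{H}(A)$ yields both inequalities.

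Both parts are essentially routine, and I expect the only point needing genuine care to be the cutoff in the $P$-direction in (i): $\beta$ must be chosen so that inserting it neither spoils the displacement of $K\times L$ (arranged by making $\beta\equiv1$ near $L$) nor increases the Hofer oscillation (arranged by the sign normalization of $H$). The orbit bookkeeping in (ii) is immediate once one observes that contractibility is preserved under the projection $p_1$, so I anticipate no real obstacle there.
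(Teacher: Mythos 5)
Your proof is correct and follows essentially the same route as the paper: for (i) multiply the displacing Hamiltonian on $M$ by a compactly supported cutoff in the $P$-direction, and for (ii) pull an admissible $H$ back via the projection, using that a null-homotopy in $M\times P$ projects to one in $M$. The only difference is that you spell out the oscillation estimate $\mathrm{osc}_{M\times P}(\beta H)\le\mathrm{osc}_M H$ (via the normalization $\min_M H(t,\cdot)=0$ when $M$ is closed), a point the paper leaves implicit.
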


\begin{proof}
Let $K\subset A\times P$ be any compact subset.  In particular we have $K\subset K_1\times K_2$ where $K_1\Subset A$ and $K_2\Subset P$.  If $e>e(A,M)$, there is then a Hamiltonan $H\co (\mathbb{R}/\mathbb{Z})\times M\to \mathbb{R}$ such that $\int_{0}^{1}\left(\max_M H(t,\cdot)-\min_M H(t,\cdot)\right)dt<e$ and $\phi_{H}^{1}(K_1)\cap K_1=\varnothing$.  If $\chi\co P\to [0,1]$ is any compactly supported smooth function with $\chi|_{K_2}=1$, then defining $\tilde{H}\co (\mathbb{R}/\mathbb{Z})\times M\times P\to\mathbb{R}$ by $\tilde{H}(t,m,p)=H(t,m)\chi(p)$ gives a Hamiltonian flow $\phi_{\tilde{H}}^{t}$ with $\phi_{\tilde{H}}^{1}(K_1\times K_2)\cap(K_1\times K_2)=\varnothing$.  Thus $e(K,M\times P)\leq e$.  So since $e>e(A,M)$ and $K\Subset A\times P$ were arbitrary this shows that $e(A\times P,M\times P)\leq e(A,M)$.

For the statement about $c_{HZ}^{\circ}$ in (ii), simply note that if $supp H\Subset A$ and $\phi_{H}^{t}$ has no nonconstant contractible periodic orbits of period at most $1$, then defining $\tilde{H}\co M\times P\to\mathbb{R}$ by $\tilde{H}(m,p)=H(m)$ results in a Hamiltonian supported in $A\times P$ with no nonconstant contractible periodic orbits of period at most $1$.  The same reasoning with the word ``contractible'' deleted proves the statement about $c_{HZ}$.
\end{proof}

\begin{cor}\label{nosqueeze} Suppose that $(M,\omega_M)$, $(N,\omega_N)$, $(P,\omega_P)$, and $(Q,\omega_Q)$ are symplectic manifolds, with $M$ closed and $N,P,Q$ all of Type (C), and $\dim M+\dim P=\dim N+\dim Q$.  Suppose that $A\subset P$ and $B\subset Q$ both have nonempty interior, and that there exists a symplectic embedding \[ M\times A\hookrightarrow N\times B.\]  Then \[ c_{HZ}(A)\leq e(B,Q).\]
\end{cor}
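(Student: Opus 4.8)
The plan is to deduce Corollary~\ref{nosqueeze} purely formally from the three ingredients already assembled: Proposition~\ref{obvious}, the naturality of the Hofer--Zehnder capacity under symplectic embeddings, and Corollary~\ref{typec}. Concretely, I would build a chain of four inequalities running from $c_{HZ}(A)$ up to $c_{HZ}(M\times A)$, across to $c_{HZ}(N\times B)$, up to $e(N\times B,N\times Q)$, and finally down to $e(B,Q)$.

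For the first link, since $(M,\omega_M)$ is closed, Proposition~\ref{obvious}(ii) applied to $A\subset P$ with $(M,\omega_M)$ playing the role of the closed factor gives
\[ c_{HZ}(A)\leq c_{HZ}(A\times M)=c_{HZ}(M\times A). \]
For the second link, the hypothesized symplectic embedding $\Psi\co M\times A\hookrightarrow N\times B$ transports any $H\in\mathcal{H}(M\times A)$ that is HZ-admissible to the function $\Psi_*H$ on $N\times Q$ (equal to $H\circ\Psi^{-1}$ on the image of $\Psi$, extended by zero): this lies in $\mathcal{H}(N\times B)$, has $\max\Psi_*H=\max H$, and has Hamiltonian flow conjugate to that of $H$ on $\Psi(M\times A)$ and trivial elsewhere, hence no nonconstant periodic orbits of period $\leq 1$. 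This is exactly the monotonicity invoked in the Introduction to obtain $w_G\leq c_{HZ}$, and yields
\[ c_{HZ}(M\times A)\leq c_{HZ}(N\times B). \]
Here the hypothesis $\dim M+\dim P=\dim N+\dim Q$ is used: it guarantees that the ambient dimensions agree, so that the two Hofer--Zehnder capacities being compared are genuinely comparable (recall that $c_{HZ}$ of a subset depends on the ambient dimension). For the third link, $N\times Q$ is a Cartesian product of manifolds of Type (C) and so is itself of Type (C), whence Corollary~\ref{typec} applies to $N\times B\subset N\times Q$:
\[ c_{HZ}(N\times B)\leq e(N\times B,N\times Q). \]
Finally, Proposition~\ref{obvious}(i), applied to $B\subset Q$ with the additional factor $(N,\omega_N)$, gives $e(B\times N,Q\times N)\leq e(B,Q)$, i.e.
\[ e(N\times B,N\times Q)\leq e(B,Q). \]
Concatenating the four displayed inequalities produces $c_{HZ}(A)\leq e(B,Q)$, which is the assertion.

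The only place that warrants genuine care is the second link: one must check that $c_{HZ}(M\times A)$ — computed through Hamiltonians whose supports are compactly contained in $(M\times A)\setminus\partial(M\times P)$ — really does push forward under $\Psi$ to honest competitors for $c_{HZ}(N\times B)$ (and, correspondingly, that this quantity is unambiguous). Because $A$ and $B$ both have nonempty interior this is unproblematic: it suffices to work with Hamiltonians supported in $M\times\mathrm{int}_P A$, push them forward by $\Psi$, and observe that, being supported in an open subset of $N\times Q$ contained in $N\times B$, their images are supported in $N\times\mathrm{int}_Q B$; the level-set and positivity conditions defining $\mathcal{H}(\,\cdot\,)$ are obviously preserved by the diffeomorphism $\Psi$. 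Everything else is a bookkeeping application of the results already in hand, so I do not anticipate any serious obstacle. (I note in passing that the hypothesis that $P$ itself be of Type (C) is not needed for this argument, only that $N$ and $Q$ be so.)
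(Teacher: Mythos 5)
Your argument is exactly the paper's: the chain $c_{HZ}(A)\le c_{HZ}(M\times A)\le c_{HZ}(N\times B)\le e(N\times B,N\times Q)\le e(B,Q)$, with the links supplied respectively by Proposition~\ref{obvious}(ii), monotonicity of $c_{HZ}$ under equidimensional symplectic embeddings, Corollary~\ref{typec}, and Proposition~\ref{obvious}(i). Your closing remark that the Type~(C) hypothesis on $P$ is not actually used is also correct.
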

\begin{proof}Indeed, by a standard property of $c_{HZ}$ (or any other symplectic capacity), if the embedding were to exist then we would require $c_{HZ}(M\times A)\leq c_{HZ}(N\times B)$.  But Corollary \ref{typec} and Proposition \ref{obvious}(i)  show that $c_{HZ}(N\times B)\leq e(N\times B,N\times Q)\leq e(B,Q)$, while Proposition \ref{obvious}(ii) shows that (since we assume $M$ to be closed) $c_{HZ}(A)\leq c_{HZ}(M\times A).$
\end{proof}
\begin{example} If $M=\{pt\}$, $A=B^{2n}(r)$, $N=\mathbb{R}^{2n-2}$, and $B=B^{2}(R)$, this recovers the original Non-Squeezing Theorem (from \cite{G}), namely that $B^{2n}(r)$ symplectically embeds in $\mathbb{R}^{2n-2}\times B^2(R)$ only when $r\leq R$.
More generally, we find that for any symplectic manifold $N$ of Type (C) (again, this includes all products of closed, Stein, and/or four-dimensional convex symplectic manifolds), $B^{2n}(r)$ embeds in $N\times B^{2}(R)$ only when $r\leq R$.  This is the generalized Non-Squeezing Theorem 1.4 of \cite{LM}, except that in \cite{LM} it is proven for arbitrary symplectic $(2n-2)$-manifolds $N$, rather than just ones of Type (C).
\end{example}

\begin{example} If one allows the manifold $M$ in Corollary \ref{nosqueeze} to be nontrivial, one can get some results that, as far as the author knows, are new (the closest analogue in the literature seems to be a theorem in \cite{LP}, a similar version of which appears as Corollary 1.30 of \cite{Lu}, which establishes Corollary \ref{nosqueeze} when $A$ and $B$ are balls and $N$ splits as a product $M\times Y$ with $M$ as one factor).  For example, if $\ep>0$ and $g$ is a natural number, let $M=\Sigma_{g,\ep}$ be the closed symplectic $2$-manifold of genus $g$ and area $\ep$.  Then taking, say, $A=B^{2}(r)^{2}$ and $B=B^2(R)$, one  finds that, for any symplectic $4$-manifold $N$ of Type (C) and any (even very small) $\ep>0$, an embedding of $\Sigma_{g,\ep}\times B^{2}(r)^2\hookrightarrow N\times B^2(R)$ can exist only if $r\leq R$.  Contrastingly, the $h$-principle implies that for any $r>1$ there is $\ep>0$ such that $B^2(\ep)\times B^{2}(r)^2$ embeds symplectically in $B^2(1)^3$ (this follows from, \emph{e.g.}, the argument in \cite{Sbook}, 1.3.1).

Note that nonsqueezing results along these lines cannot be established using the Gromov width rather than the Hofer--Zehnder capacity, since $\Sigma_{g,\ep}\times B^2(r)^2$ has very small Gromov width when $\ep$ is very small.

\end{example}

\subsection{Summary of the proof of Theorem \ref{main}}

The proof of Theorem \ref{main} is heavily influenced by the paper \cite{FGS}, in which it is shown that the inequality $c_{HZ}^{\circ}\leq e$ follows as soon as there exists a function $H\mapsto \sigma(H)$ on the set of all (time-dependent) compactly supported Hamiltonians $H$ satisfying the properties of an ``action selector'' (see Section 1.1 of \cite{FGS} for a definition, though we will not actually use the notion).  We use the Oh-Schwarz spectral invariant $\rho(\cdot;1)$ (\cite{Oh1}, \cite{Schwarz}) in the role of this $\sigma$, but we should emphasize that we do not in fact show that this is an action selector: indeed, perhaps the most basic property of an action selector, namely the condition that $\sigma(H)$ always belong to the action spectrum of $H$, remains unestablished for $\rho(H;1)$ when $H$ is degenerate and $\omega$ is spherically irrational.  However, the properties of $\rho(H;1)$ are now fairly  well-understood when $H$ is nondegenerate, and (even though the Hamiltonians involved in the definition of $c_{HZ}$ are  quite degenerate) we are able to apply this understanding to push through enough of the arguments of \cite{FGS} with the hypotheses that Frauenfelder, Ginzburg, and Schlenk impose on $\sigma$ replaced by known properties of $\rho(\cdot;1)$.  Incidentally, as we explain in, respectively, Remarks \ref{specnorm} and \ref{minlength}, as byproducts of the proof of Theorem \ref{main} we obtain new proofs of the nondegeneracy of Oh's spectral norm (originally proven in \cite{Oh3}) and of Conjecture 1.2 of \cite{MSlim} (originally proven in \cite{Sch}).

The rest of the paper is organized as follows.  Section 2 recalls the definition of the Oh--Schwarz spectral invariants; a reader who is familiar with these might just skip to Theorem \ref{ohbackground}, where we collect most of the properties of the invariants that we will use.  The proof of Theorem \ref{main} is divided between Sections 3 and 4; in the former we bound $e(A,M)$ from below using an argument similar to one used in \cite{FGS}, while in the latter we bound $c_{HZ}^{\circ}(A,M)$ from above by refining a reparametrization trick that was employed in \cite{Sch} in order to improve a theorem from \cite{Oh2} (recalled as Theorem \ref{ohlength} below).

\subsection*{Acknowledgement} I am grateful to Y.-G. Oh for his feedback on a draft of this paper.

\section{Spectral invariants} Assume throughout the rest of the paper that $(M,\omega)$ is any closed symplectic manifold.  Define \[ \Gamma_{\omega}=\frac{\pi_2(M)}{\ker(\langle c_1,\cdot\rangle)\cap \ker(\langle [\omega],\cdot\rangle)}\] and introduce the Novikov ring \[   \Lambda_{\omega}=\big\{\sum_{g\in \Gamma_{\omega}}b_gg|b_g\in \mathbb{Q},(\forall C\in\mathbb{R})(\#\{g|b_g\neq 0,\int_{S^2}g^*\omega<C\}<\infty)\big\}.\]  The quantum cohomology of $(M,\omega)$, written $QH^*(M,\omega)$, is then equal as a group to $H^*(M,\mathbb{Q})\otimes \Lambda_{\omega}$.  Schwarz \cite{Schwarz} (in the symplectically aspherical case) and Oh (for general closed $(M,\omega)$) have constructed a map \[ \rho\co C^{\infty}((\mathbb{R}/\mathbb{Z})\times M)\times (QH^*(M,\omega)\setminus \{0\})\to\mathbb{R}.\]  The construction and properties of $\rho$ are nicely surveyed in \cite{Ohsurvey}; here we just summarize a few basic points.  

If $H\in C^{\infty}((\mathbb{R}/\mathbb{Z})\times M)$, write \[ \mathcal{P}^{\circ}_{H}=\{\gamma\co \mathbb{R}/\mathbb{Z}\to M|\dot{\gamma}(t)=X_H(\gamma(t)),\gamma\mbox{ is contractible}\},\] and \[ \tilde{\mathcal{P}}_{H}^{\circ}=\frac{\{(\gamma,w)|\gamma\in\mathcal{P}_{H}^{\circ}, w\co D^2\to M, \gamma(t)=w(e^{2\pi it})\}}{(\gamma,w)=(\gamma',w')\mbox{ if }\gamma=\gamma'\mbox{ and }\overline{w}\#w'\mbox{ is trivial in } \Gamma_{\omega}},\] where $\overline{w}\#w'$ is the sphere obtained by gluing $w$ and $w'$ orientation-reversingly along their common boundary $\gamma$.  The action functional $\mathcal{A}_H\co \tilde{\mathcal{P}}_{H}^{\circ}\to\mathbb{R}$ is then defined by \[ \mathcal{A}_H([\gamma,w])=-\int_{D^2}w^*\omega-\int_{0}^{1}H(t,\gamma(t))dt.\]  If one defines $\mathcal{A}_H$ using the same formula on an appropriate cover of the space of contractible loops in $M$, then one recovers $\tilde{\mathcal{P}}_{H}^{\circ}$ as the set of critical points of this extended functional.

Assume for the moment that $H\in C^{\infty}((\mathbb{R}/\mathbb{Z})\times M)$ is a nondegenerate Hamiltonian, in the sense that its time-$1$ map $\phi_{H}^{1}$ has graph which is transverse to the diagonal in $M\times M$. 
 One defines the Floer chain complex of $H$ as \[ CF_{*}(H)=\left\{\sum_{[\gamma,w]\in \tilde{\mathcal{P}}_{H}^{\circ}}a_{[\gamma,w]}[\gamma,w]\left|a_{[\gamma,w]}\in \mathbb{Q},(\forall C\in\mathbb{R})(\#\{[\gamma,w]|a_{[\gamma,w]}\neq 0,\mathcal{A}_H([\gamma,w])>C\}<\infty) \right.\right\};\] this is naturally a module over $\Lambda_{\omega}$.  Formally, one obtains the differential on $CF_*(H)$ by counting negative gradient flow lines for $\mathcal{A}_H$; see \cite{Sal} for a survey of the details of the construction for a large family of symplectic manifolds, and \cite{FO},\cite{LT} for the general case.  There is a natural isomorphism \[ \Phi\co QH^*(M,\omega)\to HF_*(H)\] from the quantum cohomology of $(M,\omega)$ to  the homology $HF_*(H)$ of the complex $CF_*(H)$.  
 
 If $c=\sum_{[\gamma,w]}a_{[\gamma,w]}[\gamma,w]\in CF_*(H)$, define \[ \ell(c)=\max\{\mathcal{A}_H([\gamma,w])|a_{[\gamma,w]}\neq 0\}.\]  Now if $a\in QH^*(M,\omega)\setminus \{0\}$ (and $H$ is nondegenerate) define \[ \rho(H;a)=\inf\{\ell(c)|[c]=\Phi(a)\},\] where if $c\in CF_*(H)$ we use $[c]$ to denote the homology class of $c$ in $HF_*(H)$.  It follows quickly from the properties of the natural isomorphism $\Phi$ that $\{\ell(c)|[c]=\Phi(a)\}$ is bounded below, so $\rho(H;a)$ is a real number and not $-\infty$.  
 
This defines $\rho(H;a)$ for $H$ nondegenerate; for more general $H$ one observes that, if $H_1$ and $H_2$ are nondegenerate Hamiltonians and $a\in QH^*(M;\omega)$ there is an estimate \[ |\rho(H_1;a)-\rho(H_2;a)|\leq \int_{0}^{1}\max_{p\in M}|H_1(t,p)-H_2(t,p)|dt.\]  In particular $\rho(\cdot;a)$ is continuous with respect to the $C^0$-norm on nondegenerate Hamiltonians, so since nondegenerate Hamiltonians are dense in the space of all Hamiltonians we may define $\rho(\cdot;a)$ on all of $C^{\infty}((\mathbb{R}/\mathbb{Z})\times M)$ by extending continuously.  

This completes the definition of $\rho$; we now state some of its properties that we shall use.  For background, we should recall that if $H,K\in C^{\infty}((\mathbb{R}/\mathbb{Z})\times M)$ are Hamiltonians with Hamiltonian flows $\phi_{H}^{t}, \phi_{K}^{t}$, then the Hamiltonian $H\#K$ defined by \[ (H\#K)(t,p)=H(t,p)+K(t,(\phi_{H}^{t})^{-1}(p))\] has the property that its Hamiltonian flow is given by $\phi_{H\#K}^{t}=\phi_{H}^{t}\circ \phi_{K}^{t}$.  In particular, if we set $\bar{K}(t,p)=-K(t,\phi_{K}^{t}(p))$ then $\bar{K}\#K=K\#\bar{K}=0$.  Also, the \emph{action spectrum} of $H$ is by definition the set \[ \Sigma(H)=\{\mathcal{A}_H([\gamma,w])|[\gamma,w]\in \tilde{\mathcal{P}}_{H}^{\circ}\}.\] By Lemma 2.2 of \cite{Oh0}, $\Sigma(H)$ has Lebesgue measure zero.  Finally, a Hamiltonian $H$ is called \emph{normalized} if it holds that, for all $t\in\mathbb{R}/\mathbb{Z}$, $\int_{M}H(t,\cdot)\omega^n=0$ where $\dim M=2n$.

\begin{theorem}(\cite{Oh1},\cite{Ohsurvey},\cite{U})\label{ohbackground}  The function $\rho\co C^{\infty}((\mathbb{R}/\mathbb{Z})\times M)\times (QH^{*}(M,\omega)\setminus \{0\})\to\mathbb{R}$ has the following properties:\begin{itemize}\item[(i)] If $r\co \mathbb{R}/\mathbb{Z}\to\mathbb{R}$ is a smooth function then \[ \rho(H+r;a)=\rho(H;a)-\int_{0}^{1}r(t)dt.\]
\item[(ii)] $-\int_{0}^{1}\max_{p\in M}(H(t,p)-K(t,p))dt\leq \rho(H;a)-\rho(K;a) \leq -\int_{0}^{1}\min_{p\in M}(H(t,p)-K(t,p))dt$.
\item[(iii)] $-\int_{0}^{1}\max_{p\in M} H(t,p)dt\leq \rho(H;1)\leq -\int_{0}^{1}\min_{p\in M} H(t,p)dt$. 
\item[(iv)]  $\rho(H\#K;1)\leq \rho(H;1)+\rho(K;1)$.
\item[(v)] If $H$ is \emph{nondegenerate} then $\rho(H;a)\in \Sigma(H)$.
\item[(vi)] If $H$ and $K$ are two \emph{normalized} Hamiltonians which have the property that $\phi_{H}^{1}=\phi_{K}^{1}$ and if the paths $\phi_{H}^{t}$ and $\phi_{K}^{t}$ in $Ham^c(M,\omega)$ are homotopic with fixed endpoints, then \[ \rho(H;a)=\rho(K;a).\]\end{itemize}
\end{theorem}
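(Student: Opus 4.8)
The plan is to derive each of (i)--(vi) from the definition of $\rho$ recalled above together with the standard structural operations on Hamiltonian Floer complexes --- the action filtration, continuation maps, the pair-of-pants product, and the naturality isomorphisms --- establishing everything first for nondegenerate $H$ (and nondegenerate pairs $H,K$) and then deducing the general statements of (i)--(iv) by $C^0$-continuity, which is harmless since each is a closed condition under $C^0$-limits. Properties (v) and (vi) are asserted only under hypotheses (nondegeneracy, respectively the path-homotopy condition) that are not stable under perturbation, so for those no limiting argument is attempted. Property (i) is then formal: adding $r=r(t)$ to $H$ leaves $X_H$, hence $\tilde{\mathcal{P}}_{H}^{\circ}$ and $\Phi$, unchanged, while shifting every action value $\mathcal{A}_H([\gamma,w])$ by the constant $-\int_{0}^{1}r$; so $\ell$, and therefore $\rho(\cdot;a)$, shift by the same constant.

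For (ii) I would invoke continuation maps: an $s$-dependent interpolating homotopy from $K$ to $H$ induces a chain map $\Psi\co CF_*(K)\to CF_*(H)$ compatible with $\Phi$ on homology, and the standard energy estimate for continuation trajectories (applied to the linear homotopy $K+\beta(s)(H-K)$) gives $\ell(\Psi c)\le \ell(c)-\int_{0}^{1}\min_{p}(H(t,p)-K(t,p))\,dt$; feeding in cycles $c$ with $[c]=\Phi(a)$ and $\ell(c)\to\rho(K;a)$ yields $\rho(H;a)\le\rho(K;a)-\int_{0}^{1}\min(H-K)$, and interchanging $H\leftrightarrow K$ gives the lower bound. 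Property (iii) is then the specialization $K=0$ of (ii) once one knows $\rho(0;1)=0$, which is a standard normalization checked by $C^0$-approximating $H\equiv 0$ by a small autonomous Morse function and identifying the Floer complex with the Morse complex.

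Property (iv) comes from the pair-of-pants product $\ast\co CF_*(H)\otimes CF_*(K)\to CF_*(H\#K)$, which descends on homology to the quantum product under $\Phi$ and satisfies $\ell(x\ast y)\le\ell(x)+\ell(y)$ because a pair-of-pants solution has nonnegative energy; since $1\ast 1=1$ in $QH^*(M,\omega)$, pairing near-optimal cycles for $\rho(H;1)$ and $\rho(K;1)$ produces a cycle representing $\Phi(1)$ in $CF_*(H\#K)$ of action at most $\rho(H;1)+\rho(K;1)+\ep$. Property (vi) follows by using the path $\{\phi_{H}^{t}\}$ to build the naturality identification between $CF_*(H)$ and $CF_*(K)$: the hypotheses that $\phi_{H}^{1}=\phi_{K}^{1}$ and that the two paths are homotopic rel endpoints ensure that capped one-periodic orbits and their Conley--Zehnder indices correspond, while the normalization $\int_M H(t,\cdot)\,\omega^n=0$ eliminates the additive ambiguity in the action, so the identification preserves $\mathcal{A}_H$ and hence $\rho$.

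The genuinely substantive point --- and the step I expect to be the main obstacle --- is (v): showing that for nondegenerate $H$ the infimum defining $\rho(H;a)$ is attained and lies in $\Sigma(H)$. (As the introduction notes, this is precisely what is \emph{not} known for degenerate $H$ when $\omega$ is spherically irrational.) The argument, due to Oh and carried out in general in \cite{U}, proceeds by contradiction: the set of $\ell$-values of cycles representing $\Phi(a)$ is bounded below, so has an infimum $\lambda$; if $\lambda\notin\Sigma(H)$ there is $\delta>0$ with $(\lambda-\delta,\lambda+\delta)\cap\Sigma(H)=\varnothing$, and one argues --- using that the Floer differential strictly lowers the action and that $\Sigma(H)$ has a gap there --- that any cycle $c$ with $[c]=\Phi(a)$ and $\ell(c)<\lambda+\delta$ is homologous to one with $\ell\le\lambda-\delta$, contradicting the definition of $\lambda$. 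Turning this ``the boundary operator cannot cross the gap'' heuristic into a rigorous statement about the action-filtered Floer complex over the Novikov ring $\Lambda_{\omega}$ is the technical core of the theorem; once it is in hand, $\lambda\in\Sigma(H)$ and the infimum is realized.
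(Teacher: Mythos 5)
The paper's ``proof'' of Theorem~\ref{ohbackground} is essentially a list of precise citations (Theorem~5.3(2) and Theorem~6.1 of \cite{Oh1} for (ii)--(iv), Corollary~1.5 of \cite{U} for (v), Theorem~6.1 of \cite{Ohsurvey} for (vi)), whereas you sketch the underlying arguments from scratch. For (i)--(v) your sketches faithfully reproduce the content of those references: the formal action shift for (i), continuation-map energy estimates for (ii), the normalization $\rho(0;1)=0$ via $C^0$-approximation and Morse theory for (iii), the pair-of-pants product and its action estimate for (iv), and the ``cannot cross the spectral gap'' argument for (v) (and you correctly flag that rigorizing (v) over $\Lambda_\omega$ is the real work of \cite{U}). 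Reproving rather than citing buys you a self-contained exposition at the cost of sweeping considerable technical detail into phrases like ``the standard energy estimate.''

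There is, however, a genuine gap in your treatment of (vi). You argue that the homotopy rel endpoints identifies the capped orbits, Conley--Zehnder indices, and (via normalization) the action values of $CF_*(H)$ and $CF_*(K)$, ``so the identification preserves $\mathcal{A}_H$ and hence $\rho$.'' But matching generators and action values does not match the \emph{filtered chain complexes}: the Floer differential depends on the Hamiltonian itself through the Floer equation, not merely on the time-one map and its capped fixed points, so the set of cycles representing $\Phi(a)$ in $CF_*(H)$ and in $CF_*(K)$ can a priori be quite different, and a minimax over two different sets of cycles with the same ambient action values can have different outcomes. No chain-level ``naturality isomorphism'' between $CF_*(H)$ and $CF_*(K)$ is available here. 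The actual argument --- and the reason the paper explicitly writes ``in view of (v)'' before invoking \cite{Ohsurvey} --- is a continuity-plus-spectrality one: take the homotopy of paths, obtain a one-parameter family $H_s$ of normalized Hamiltonians all generating the same time-one map, observe that $\Sigma(H_s)=\Sigma(H)$ is constant in $s$ (this \emph{is} the piece you verified), then use (ii) to see $s\mapsto\rho(H_s;a)$ is continuous and (v), together with the measure-zero property of the action spectrum, to conclude it is locally constant, hence constant. (One must also be a little careful about nondegeneracy along the family; the standard fix is an approximation argument.) Your sketch omits the essential role of (v) and of the measure-zero gap, which is exactly what makes (vi) nontrivial.
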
    

\begin{proof} (i) is an obvious consequence of the definition, since adding a function of time to $H$ does not affect the Hamiltonian flow or the Floer trajectories but does affect the action functional $\mathcal{A}_H$ in a manner consistent with (i).

(ii) follows directly from Theorem 5.3 (2) of \cite{Oh1}.  (Strictly speaking in \cite{Oh1} the only Hamiltonians which are considered are the normalized ones, but it is easy to see that the proof of Theorem 5.3 (2) applies equally well to non-normalized Hamiltonians).

(iii) is (ii) with $K=0$, bearing in mind that (by, \emph{e.g.}, Theorem 6.1(2) of \cite{Oh1}) $\rho(0;1)=0$.

(iv) is a special case of the triangle inequality  $\rho(H\#K;a\cdot b)=\rho(H;a)+\rho(K; b)$ (Theorem 6.1 (4) of \cite{Oh1}) where $a\cdot b$ is the quantum product of $a,b\in QH^*(M,\omega)$ (one can use (i) to extend the triangle inequality from normalized Hamiltonians to non-normalized Hamiltonians).

(v) is Corollary 1.5 of \cite{U}; there is also a proof in \cite{OhCerf} in the strongly semipositive case.

In view of (v), (vi) follows from Theorem 6.1 of \cite{Ohsurvey}.
\end{proof}

Note in particular that, since $\min_{p\in M}\bar{H}(t,p)=-\max_{p\in M}H(t,p)$, (iii) above shows that \[ \rho(H;1)+\rho(\bar{H};1)\leq \int_{0}^{1}\left(\max_{p\in M}H(t,p)-\min_{p\in M}H(t,p)\right)dt.\]  So in the notation of the introduction we have \begin{equation}\label{diprho} \|\phi\|\geq \inf_{H\mapsto \phi}\left(\rho(H;1)+\rho(\bar{H};1)\right)\end{equation}
for any $\phi\in Ham^c(M,\omega)$.  

\begin{remark}\label{specnorm} In \cite{Oh3}, Oh defines a ``spectral norm'' $\gamma\co Ham^c(M,\omega)\to [0,\infty)$ by \[ \gamma(\phi)=\inf_{H\mapsto \phi}\left(\rho(H;1)+\rho(\bar{H};1)\right).\]  Our proof of Theorem \ref{main} will in fact show that if $U$ is an open set with the property that $\phi(U)\cap U=\varnothing$, then $\gamma(\phi)\geq c_{HZ}^{\circ}(U,M)$ (indeed, this follows immediately from Propositions \ref{kkbar} and \ref{aut} below).  Of course, if $\phi$ is not equal to the identity, there is a Darboux ball $U=B^{2n}(r)$ in $M$ such that  $\phi(U)\cap U=\varnothing$, and so we have $\gamma(\phi)\geq \pi r^2>0$.  Thus our argument gives a new proof of the fact that $\gamma$ vanishes only for the identity symplectomorphism; this was established by a subtle, rather different argument as Theorem A(1) of \cite{Oh3}.  The nondegeneracy of $\gamma$ is also proven as part of Theorem 12.4.4 of \cite{MS}, using a similar argument to the one in \cite{Oh3}.

\end{remark}

\section{Bounding $e$ from below}
The proof of Theorem \ref{main}, which is patterned after arguments in Section 2 of \cite{FGS}, follows from two propositions.  The first of these is:

\begin{prop}\label{kkbar} (compare \cite{FGS}, Proposition 2.1) Suppose that $H,K\co (\mathbb{R}/\mathbb{Z})\times M\to\mathbb{R}$ are two Hamiltonians, with the property that, where $S(H)=\cup_{t\in\mathbb{R}/\mathbb{Z}}supp(H(t,\cdot))\subset M$, we have \[ \phi_{K}^{1}(S(H))\cap S(H)=\varnothing.\]  Then \[ \rho(H;1)\leq \rho(K;1)+\rho(\bar{K};1).\]
\end{prop}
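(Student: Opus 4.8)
The plan is to follow \cite{FGS}, Section 2, using only the properties of $\rho(\cdot;1)$ listed in Theorem \ref{ohbackground}; write $\psi=\phi_{K}^{1}$. First I would make three harmless reductions. Since $S(H)$ is compact and disjoint from $\psi(S(H))$, there is an open $V\supset S(H)$ with $\psi(\overline{V})\cap\overline{V}=\varnothing$; perturbing $H$ among Hamiltonians supported in $V$ and perturbing $K$ slightly in $C^{1}$, I may assume $H$ and $K$ are nondegenerate while keeping $\psi(S(H))\cap S(H)=\varnothing$ — this is legitimate because $\rho(\cdot;1)$ is $C^{0}$-continuous (Theorem \ref{ohbackground}(ii)) and $\bar K$ depends $C^{0}$-continuously on $K$ in the $C^{1}$-topology. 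Using Theorem \ref{ohbackground}(i) I may also assume $K$ normalized: writing $K=K_{0}+c(t)$ with $K_{0}$ normalized, one checks $\bar K\#H\#K=\overline{K_{0}}\#H\#K_{0}$ and $\rho(K;1)+\rho(\bar K;1)=\rho(K_{0};1)+\rho(\overline{K_{0}};1)$, so nothing relevant is affected.

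The first substantive step is a triangle-inequality reduction. From $(H\#K)\#\bar K=H$ and $K\#(\bar K\#H\#K)=H\#K$, Theorem \ref{ohbackground}(iv) gives
\[ \rho(H;1)\le\rho(H\#K;1)+\rho(\bar K;1)\le\rho(K;1)+\rho(\bar K;1)+\rho(\bar K\#H\#K;1),\]
so it is enough to bound $\rho(\bar K\#H\#K;1)$. I would then identify this quantity. Its time-$t$ flow is $(\phi_{K}^{t})^{-1}\phi_{H}^{t}\phi_{K}^{t}$, and the homotopy $\Gamma_{r}^{t}=(\phi_{K}^{(1-r)t+r})^{-1}\phi_{H}^{t}\phi_{K}^{(1-r)t+r}$ — which slides $\phi_{K}^{t}$ to its endpoint $\psi$ — is a fixed-endpoint homotopy from this path to $t\mapsto\psi^{-1}\phi_{H}^{t}\psi$, which is the Hamiltonian flow of $H\circ\psi$, the function $(t,p)\mapsto H(t,\psi(p))$. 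Since $\psi$ is symplectic, $H\circ\psi$ has the same $\omega^{n}$-average as $H$ at each time; subtracting the common average (which by Theorem \ref{ohbackground}(i) changes $\rho(\bar K\#H\#K;1)$ and $\rho(H\circ\psi;1)$ equally) and applying Theorem \ref{ohbackground}(vi) to the two resulting normalized Hamiltonians, I get $\rho(\bar K\#H\#K;1)=\rho(H\circ\psi;1)$. Note that $H\circ\psi$ is again supported in a set, namely $\psi^{-1}(S(H))$, that is displaced by $\psi$.

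When $H\ge 0$ the proof is complete: then $H\circ\psi\ge 0$, and as $H\circ\psi$ is compactly supported Theorem \ref{ohbackground}(iii) gives $\rho(H\circ\psi;1)\le-\int_{0}^{1}\min_{M}(H\circ\psi)(t,\cdot)\,dt=0$. For general $H$ — in particular for $H\le 0$, which is the sign actually needed in Section 4 to bound $c_{HZ}^{\circ}$ from above — the value $\rho(H\circ\psi;1)$ need not be $\le 0$ and this monotonicity bound is far too weak; instead I would run a deformation argument as in \cite{FGS}. For $s\in[0,1]$ set $F_{s}=\bar K\#(sH)\#K$, so $F_{0}=\bar K\#K=0$ and $\rho(F_{0};1)=0$; the map $s\mapsto\rho(F_{s};1)$ is continuous by Theorem \ref{ohbackground}(ii), and for each $s$ for which $sH$ is nondegenerate one has $\rho(F_{s};1)\in\Sigma(F_{s})$ by spectrality (Theorem \ref{ohbackground}(v)). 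The displacement hypothesis $\psi(S(H))\cap S(H)=\varnothing$ must now be fed in to constrain how the spectral value $\rho(F_{s};1)$ can move along this family and thereby to bound it at $s=1$. This last point — making displacement interact with spectrality so as to upgrade the lossy triangle-inequality estimate into the sharp one — is where essentially all the content of the proposition sits, and is the step I expect to be the main obstacle (the action spectra involved may be complicated, so the continuity of $\rho(\cdot;1)$ does not by itself locate the value, and the displacement has to supply the missing control); everything before it is formal manipulation of the properties collected in Theorem \ref{ohbackground}.
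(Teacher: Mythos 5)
Your preliminary reductions (approximating $K$ by a nondegenerate Hamiltonian, normalizing $K$, and the double application of the triangle inequality to isolate $\rho(\bar K\#H\#K;1)$), as well as the identification via Theorem \ref{ohbackground}(vi) of $\rho(\bar K\#H\#K;1)$ with $\rho(H\circ\psi;1)$ and the correct observation that the sign of $H$ needed downstream is $H\le 0$, are all sound. And you have honestly flagged the remaining step as the crux. But there is a structural error earlier in the proposal that makes the deformation you sketch unworkable, and it is precisely the point the paper deals with in the footnote to its own proof: \emph{one cannot assume $H$ is nondegenerate}. A Hamiltonian $G$ on a closed $M$ with $S(G)\Subset M$ a proper subset has every point of $M\setminus S(G)$ as a fixed point of $\phi_G^1$, so its time-$1$ graph is never transverse to the diagonal. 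Thus the reduction ``perturbing $H$ among Hamiltonians supported in $V$ \dots I may assume $H$ \dots nondegenerate'' is false, and the same obstruction kills the deformation family you propose. The time-$1$ map of $F_s=\bar K\#(sH)\#K$ is $(\phi_K^1)^{-1}\circ\phi_{sH}^1\circ\phi_K^1$, whose fixed-point set contains the open dense set $(\phi_K^1)^{-1}(M\setminus S(H))$; hence $F_s$ is degenerate for every $s$, and Theorem \ref{ohbackground}(v) cannot be applied to it. The same is true of $s(H\circ\psi)$. So the continuity-plus-spectrality rigidity argument, as you have set it up, never gets off the ground.

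The paper's resolution is to change \emph{which} family is deformed, in a way that turns the displacement hypothesis directly into nondegeneracy. Rather than conjugating $H$ by the $K$-flow, the paper works with the concatenation: after normalizing $H$ to $\underline H=H-c(t)$ (with $c(t)=\tfrac{1}{\int\omega^n}\int_M H(t,\cdot)\,\omega^n$), it considers reparametrized Hamiltonians $L_u$ generating (up to reparametrization) the path $t\mapsto\phi_{u\underline H}^t\circ\phi_K^t$. Because $\phi_K^1$ displaces $S(H)$ and $\phi_{u\underline H}^1$ is the identity off $S(H)$, the fixed points of $\phi_{u\underline H}^1\circ\phi_K^1$ coincide with those of $\phi_K^1$; hence $L_u$ inherits nondegeneracy from $K$. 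Moreover the $1$-periodic orbits of $L_u$ are exactly the $K$-orbits, so $\Sigma(L_u)=\Sigma(K)+u\int_0^1 c$. Now the spectrality and continuity argument you had in mind does work: $u\mapsto\rho(L_u;1)-u\int_0^1 c$ is continuous into the measure-zero set $\Sigma(K)$, hence constant, giving the exact identity $\rho(\underline H\#K;1)=\rho(K;1)+\int_0^1 c$; combined with a single use of the triangle inequality $\rho((\underline H\#K)\#\bar K;1)\le\rho(\underline H\#K;1)+\rho(\bar K;1)$ this yields the proposition. So the missing idea is not merely to ``feed the displacement into spectrality'' in the abstract, but to arrange the family so that the displacement hypothesis is literally what makes the family nondegenerate and pins down its action spectrum; the conjugated family you wrote cannot do that.
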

\begin{proof}
First we observe that it suffices to prove the proposition in the special case that $K$ is nondegenerate.\footnote{On the other hand, one can't assume $H$ is nondegenerate, since if $H$ were nondegenerate we would have $S(H)=M$ and no $K$ could satisfy the hypothesis.  Since we therefore can't apply Theorem \ref{ohbackground} (v) directly to $H$, this leads to some small differences between our proof and that of the corresponding result in \cite{FGS}.}  Indeed, the condition that $\phi_{K}^{1}(S(H))\cap S(H)=\varnothing$ is an open condition on $\phi_K$, so we can find a sequence of nondegenerate Hamiltonians $K_n$ which converge to $K$ in $C^2$-norm and satisfy $\phi_{K_n}^{1}(S(H))\cap S(H)=\varnothing$.  If the proposition is proven with $K$ replaced by the nondegenerate Hamiltonians $K_n$, then it also holds for $K$ by virtue of the continuity of $\rho(\cdot;1)$.

Assume therefore that $K$ is nondegenerate.  By Theorem \ref{ohbackground} (i) and the definition of $\bar{K}$, adding a function of time to $K$ leaves $\rho(K;1)+\rho(\bar{K};1)$ unchanged, so there is no loss of generality in assuming that $K$ is also normalized.  Then since $\phi_{K}^{1}$ displaces $S(H)$, the fixed points of $\phi_{H}^{1}\circ \phi_{K}^{1}$ are precisely the same as those of $\phi_{K}^{1}$, and all lie outside of $S(H)$.  So the definition of nondegeneracy implies that $H\#K$ is also nondegenerate.  

For each $t$, let $c(t)=\frac{\int_M H(t,\cdot)\omega^n}{\int_M\omega^n}$, so that the Hamiltonian $\underline{H}(t,p):=H(t,p)-c(t)$ is normalized.

Choose a smooth function $\chi\co [0,1/2]\to [0,1]$ with the properties that $\chi'(t)\geq 0$ for all $t$, $\chi(0)=0$, $\chi(1/2)=1$,  and $\chi'$ vanishes to infinite order at  $0$ and at $1/2$.  Now, for each $u\in [0,1]$, define \[ L_u(t,p)=\left\{\begin{array}{ll}\chi'(t)K(\chi(t),p) & 0\leq t\leq 1/2 \\ u\chi'(t-1/2)\underline{H}(\chi(t-1/2),p) & 1/2\leq t\leq 1\end{array}\right.\]  Since $\chi'$ vanishes to infinite order at $0$ and $1/2$, each $L_u$ defines a smooth Hamiltonian $(\mathbb{R}/\mathbb{Z})\times M\to\mathbb{R}$, which moreover is normalized since both $K$ and $\underline{H}$ are.  The time-$1$ flow of $L_u$ is given by $\phi_{L_u}^{1}=\phi_{u\underline{H}}^{1}\circ \phi_{K}^{1}$; thus, again, the fixed points of $\phi_{L_u}^{1}$ are just those of $\phi_{K}^{1}$, and $\phi_{L_u}^{1}$ coincides with $\phi_{K}^{1}$ on a neighborhood of each of these fixed points.  Thus the Hamiltonians $L_u$ are nondegenerate.  

\begin{lemma} $\rho(L_1;1)=\rho(L_0;1)+\int_{0}^{1}c(t)dt$.\end{lemma}
\begin{proof}
If $p$ is a fixed point of $\phi_{L_{u}}^{1}$, we have $\phi_{u\underline{H}}^{1}(\phi_{K}^{1}(p))=p$.  $p$ of course then cannot lie in $S(H)$, since $\phi_{K}^{1}$ displaces $S(H)$ and $\phi_{u\underline{H}}^{1}$ preserves $M\setminus S(H)$.  So since $\phi_{u\underline{H}}^{1}(S(H))=S(H)$, $\phi_{K}^{1}(p)$ cannot belong to $S(H)$.  Thus, regardless of the choice of $u\in [0,1]$, the $1$-periodic orbits of $X_{L_{u}}$ are precisely orbits of form \[ \gamma_p(t)=\left\{\begin{array}{ll}\phi_{K}^{\chi(t)}(p) & 0\leq t\leq 1/2 \\ p & 1/2\leq t\leq 1\end{array}\right.,\] where $p$ is any fixed point of $\phi_{K}^{1}$.  If $\gamma_p$ is any such orbit which is contractible, and if $w\co D^2\to M$ satisfies $w(e^{2\pi it})=\gamma_p(t)$, we then have \begin{align*} \mathcal{A}_{L_u}([\gamma_p,w])&=-\int_{D^2}w^*\omega-\int_{0}^{1}L_u(t,\gamma_p(t))dt
\\&=-\int_{D^2}w^*\omega-\int_{0}^{1/2}\chi'(t)K(\chi(t),\phi^{\chi(t)}_{K}(p))dt-\int_{1/2}^{1}u\chi'(t-1/2)(-c(\chi(t-1/2)))dt\\&=
-\int_{D^2}w^*\omega-\int_{0}^{1}K(t,\phi_{K}^{t}(p))dt+u\int_{0}^{1}c(t)dt.\end{align*}  The sum of the first two terms above is just equal to $\mathcal{A}_K([\gamma'_p,w'])$ where $\gamma'_p(t)=\phi_{K}^{t}(p)$ and $w'$ is a reparametrization of $w$ having $w'(e^{2\pi it})=\gamma'_p(t)$.  We therefore obtain that \[ \Sigma(L_u)=\left\{\alpha+u\int_{0}^{1}c(t)dt\left|\alpha\in \Sigma(K)\right.\right\}.\]  Therefore, by parts (ii) and (v) of Theorem \ref{ohbackground} and by the nondegeneracy of $L_u$, the function $u\mapsto \rho(L_u;1)-u\int_{0}^{1}c(t)dt$ is a continuous function from $[0,1]$ to the measure-zero set $\Sigma(K)$ and so is constant, from which the lemma follows.
\end{proof}

Now $\phi_{L_0}^{t}=\left\{\begin{array}{ll}\phi_{K}^{\chi(t)}& 0\leq t\leq 1/2\\ \phi_{K}^{1}& 1/2\leq t\leq 1\end{array}\right.$ is obviously homotopic rel endpoints to $\phi_{K}^{t}$ in $Ham^{c}(M,\omega)$, so by Theorem \ref{ohbackground} (vi) we have \[ \rho(K;1)=\rho(L_0;1).\]  Likewise \[ \phi_{L_1}^{t}=\left\{\begin{array}{ll}\phi_{K}^{\chi(t)} & 0\leq t\leq 1/2 \\ \phi_{\underline{H}}^{\chi(t-1/2)}\circ \phi_{K}^{1} & 1/2\leq t\leq 1\end{array}\right. \] is homotopic rel endpoints to $\phi_{\underline{H}}^{t}\circ \phi_{K}^{t}=\phi_{\underline{H}\#K}^{t}$, so (using that $L_1$ and $\underline{H}\#K $ are, unlike $H\#K$, normalized), Theorem \ref{ohbackground} (vi) shows that \[ \rho(\underline{H}\#K;1)=\rho(L_1;1)=\rho(K;1)+\int_{0}^{1}c(t)dt.\]

Hence \begin{align*} \rho(H;1)&=\rho(\underline{H};1)-\int_{0}^{1}c(t)dt= \rho((\underline{H}\#K)\#\bar{K};1)-\int_{0}^{1}c(t)dt
\\&\leq \rho(\underline{H}\#K;1)+\rho(\bar{K};1)-\int_{0}^{1}c(t)dt\\&=\rho(K;1)+\rho(\bar{K};1),\end{align*} where the first equality  uses Theorem \ref{ohbackground} (i) and the inequality uses Theorem \ref{ohbackground} (iv).
\end{proof}

\begin{cor}\label{dispcor}
If $A\subset M$, then for every $H\in C^{\infty}((\mathbb{R}/\mathbb{Z})\times M)$ satisfying $S(H)\subset A$ we have \[ e(A,M)\geq \rho(H;1).\]
\end{cor}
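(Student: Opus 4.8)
The plan is to deduce Corollary \ref{dispcor} from Proposition \ref{kkbar} together with inequality \eqref{diprho}, with a short limiting argument to handle the passage from compact subsets to general $A$. First I would reduce to the case where $A$ is compact: since $S(H)$ is a compact subset of $M$ contained in $A$ by hypothesis, and since $e(A,M)=\sup\{e(K,M)\mid K\Subset A\}$ by definition, it suffices to prove $e(S(H),M)\geq\rho(H;1)$ and then observe $e(A,M)\geq e(S(H),M)$. So I would replace $A$ by $B:=S(H)$, a compact set, and prove $e(B,M)\geq\rho(H;1)$.

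Next, unwind the definition of $e(B,M)$. Let $\phi\in Ham^c(M,\omega)$ be any Hamiltonian symplectomorphism with $\phi(B)\cap B=\varnothing$; I must show $\|\phi\|\geq\rho(H;1)$, and then take the infimum over such $\phi$. Given such a $\phi$, pick any Hamiltonian $K\co(\mathbb{R}/\mathbb{Z})\times M\to\mathbb{R}$ with $K\mapsto\phi$, i.e.\ $\phi_K^1=\phi$. Then $\phi_K^1(S(H))\cap S(H)=\phi(B)\cap B=\varnothing$, so the hypothesis of Proposition \ref{kkbar} is satisfied, and that proposition gives
\[ \rho(H;1)\leq\rho(K;1)+\rho(\bar K;1). \]
Now take the infimum over all $K\mapsto\phi$: by inequality \eqref{diprho} we have $\inf_{K\mapsto\phi}\bigl(\rho(K;1)+\rho(\bar K;1)\bigr)\leq\|\phi\|$ (in fact this infimum equals $\gamma(\phi)$, but we only need the upper bound by $\|\phi\|$, which is immediate from part (iii) of Theorem \ref{ohbackground} as noted right after that theorem). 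Hence $\rho(H;1)\leq\|\phi\|$. Taking the infimum over all $\phi$ displacing $B$ yields $\rho(H;1)\leq e(B,M)\leq e(A,M)$, as desired.

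This argument is essentially a bookkeeping exercise: all the analytic content is already packaged into Proposition \ref{kkbar} and the properties of $\rho$ collected in Theorem \ref{ohbackground}. The only point requiring a little care is the order of quantifiers in the limiting argument — one must be careful to fix $\phi$ (hence $B$, hence the displacement), then choose $K$ for that $\phi$, apply Proposition \ref{kkbar}, and only afterwards take infima, first over $K$ and then over $\phi$. I do not anticipate any genuine obstacle here; if there is a subtlety it is merely the standard one of confirming that every compactly supported $H$ with $S(H)\subset A$ does have $S(H)\Subset M$ (so that $e(S(H),M)$ is defined via the "compact" clause of the definition of $e$), which is automatic since $S(H)$ is a finite union over $t$ of supports but is in fact compact as the closure of $\bigcup_t\{p:H(t,p)\neq 0\}$ inside the compact manifold $M$.
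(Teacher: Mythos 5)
Your argument is correct and is essentially the paper's proof, just organized with explicit quantifiers: fix $\phi$ displacing $S(H)$, pick $K\mapsto\phi$, apply Proposition \ref{kkbar}, then optimize over $K$ using \eqref{diprho} and over $\phi$. (One small wording slip: $S(H)$ is not a ``finite union over $t$'' since $t$ ranges over the circle; what makes $S(H)\Subset A$ automatic is simply that $S(H)$ is a closed subset of the closed manifold $M$, hence compact, and by hypothesis lies in $A$.)
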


\begin{proof}  If $S(H)\subset A$, then for any number $e>e(A,M)$ there is $\phi\in Ham^{c}(M,\omega)$ such that $\phi(A)\cap A=\varnothing$ and $\|\phi\|<e$.  So in light of (\ref{diprho}) there is $K$ such that $\phi_{K}^{1}=\phi$ and $\rho(K;1)+\rho(\bar{K};1)<e$.  So Proposition \ref{kkbar} shows that $\rho(H;1)< e$.

\end{proof}

\section{Bounding $c_{HZ}^{\circ}$ from above}

Theorem \ref{main} now follows quickly from the following fact, corresponding to Condition (AS2$^+$) in the framework set up in \cite{FGS} (but adjusted for a different sign convention for Hamiltonian vector fields):

\begin{prop}\label{aut} If $H\co M\to\mathbb{R}$ is an autonomous Hamiltonian whose Hamiltonian flow has no nonconstant contractible periodic orbits of period at most $1$, then \[ \rho(H;1)=-\min_M H.\] \end{prop}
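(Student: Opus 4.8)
The inequality $\rho(H;1)\le -\min_M H$ is immediate from Theorem~\ref{ohbackground}(iii), so the content is the reverse inequality $\rho(H;1)\ge -\min_M H$. Using Theorem~\ref{ohbackground}(i) I would first add a constant to reduce to $\min_M H=0$, so that the goal becomes $\rho(H;1)=0$. Two elementary remarks set the stage. The hypothesis is self-reproducing under rescaling: for $s\in(0,1]$ the flow of $sH$ is $\phi_{sH}^{t}=\phi_{H}^{st}$, so a nonconstant contractible periodic orbit of $sH$ of period $T\le 1$ would be one of $H$ of period $sT\le 1$; hence $s\mapsto\rho(sH;1)$ is a continuous (by the $C^0$-continuity of $\rho$) function on $[0,1]$ vanishing at $s=0$, which I would use as a deformation scaffold. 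Also, since $H$ is autonomous one has $\bar H=-H$, which likewise satisfies the hypothesis; proving the proposition for all such $H$ at once therefore also controls $\rho(\bar H;1)$, and this is precisely the input that, via Corollary~\ref{dispcor} (applied to $-H$, which has $\rho(-H;1)=-\min(-H)=\max H$), yields $c_{HZ}^{\circ}\le e$.

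For the reverse inequality I would pass to nondegenerate approximations $G_\nu\to H$ in $C^\infty$, so that $\rho(G_\nu;1)\to\rho(H;1)$. Because $H$ has no nonconstant contractible periodic orbit of period $\le 1$, an Arzel\`a--Ascoli argument applied to the perturbed orbit equation $\dot\gamma=X_{G_\nu}(\gamma)$ (whose right-hand side converges uniformly to $X_H$) shows that, for $\nu$ large, every contractible $1$-periodic orbit of $G_\nu$ is $C^1$-close to a \emph{constant} orbit at a critical point of $H$; such an orbit stays near a single level set $\{H=c\}$, and computing its action with the obvious small capping (other cappings contributing areas of classes in $\Gamma_\omega$) confines $\Sigma(G_\nu)$ to a shrinking neighborhood of $\{-H(p):p\in\mathrm{Crit}(H)\}+\{\int_{S^2}g^*\omega : g\in\Gamma_\omega\}$. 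By Theorem~\ref{ohbackground}(v), $\rho(G_\nu;1)\in\Sigma(G_\nu)$, so $\rho(G_\nu;1)$ lies in this clustered set, while $-\min G_\nu\ge\rho(G_\nu;1)$ with $-\min G_\nu\to 0$. It is convenient to keep in reserve the time-reparametrized Hamiltonians $H^\beta(t,p)=\beta'(t)H(p)$ (with $\beta$ as in the proof of Proposition~\ref{kkbar}): these have $\phi_{H^\beta}^{1}=\phi_{H}^{1}$ and, by Theorem~\ref{ohbackground}(vi) after normalizing, the same spectral invariant, so one may reparametrize freely when bookkeeping the time-dependence of the perturbations --- this is the reparametrization device (compare \cite{Sch} and Oh's Theorem~\ref{ohlength}) that the sharp statement refines.

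The crux is to promote ``$\rho(G_\nu;1)$ lies somewhere in the clustered spectrum'' to ``$\rho(G_\nu;1)$ is within $o(1)$ of the top value $-\min H=0$.'' For this I would analyze the action-filtered Floer complex of $G_\nu$: the orbits clustered near the minimum locus $H^{-1}(0)$, with trivial cappings, lie at action $\approx 0$ and generate a subcomplex whose local Floer homology is the homology of that locus; the PSS description of the isomorphism $\Phi$ then identifies the image of the quantum unit $1$ in this local homology with the fundamental class of the minimum locus, which is nonzero. Hence no Floer cycle representing $\Phi(1)$ can be supported in the subcomplex of orbits of action $<-\eta$, so $\rho(G_\nu;1)\ge-\eta$; letting $\nu\to\infty$ and then $\eta\to 0$ gives $\rho(H;1)\ge 0$. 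I expect this to be the main obstacle. For $C^2$-small Morse Hamiltonians the assertion ``$\rho(\cdot;1)$ equals minus the minimum'' is classical --- the unit is represented by the sum of the minima --- but transporting it to the highly degenerate $H$ (which is even locally constant) requires the period gap ``no orbits of period $\le 1$,'' not merely ``of period $1$,'' to make the clustering of the second paragraph robust, and, when $\omega$ is spherically irrational, careful control of the Novikov and grading bookkeeping so that low-level orbits with large cappings do not contribute spuriously to the top of the spectrum in the relevant degree. This is exactly why one works only through the nondegenerate $G_\nu$, where Theorem~\ref{ohbackground}(v) (from \cite{U}) is available, given that $\rho(H;1)\in\Sigma(H)$ is not known for degenerate $H$ with $\omega$ spherically irrational.
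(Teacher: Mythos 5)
Your proposal takes a genuinely different route from the paper, and the crucial step is left as an acknowledged sketch rather than a proof. The paper's argument has two clean pieces. First, Theorem~\ref{approx}: any slow autonomous $H$ admits a $C^0$-close approximation by a \emph{slow, flat, Morse} function $K$. This is the nontrivial construction, carried out by post-composing $H$ with a function $f$ chosen to scale $H$ down drastically near its critical values, invoking the Yorke period estimate to guarantee that the resulting vector field has no short orbits near the (now very flat) near-critical region, and then applying a generic $C^2$-small Morse perturbation which, precisely because of the Yorke margin, remains both slow and flat. Second, Oh's Theorem~\ref{ohlength} (Theorem~IV of \cite{Oh2}) is invoked as a black box: for $K$ slow, flat, and Morse, $\rho(K;1)=-\min_M K$. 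Continuity of $\rho$ in the $C^0$-norm then yields Proposition~\ref{aut}. The hard Floer-theoretic input (that the quantum unit is carried by minima) is thus entirely outsourced to \cite{Oh2}, and only in its Morse form.

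Your ``crux'' paragraph is an attempt to reprove that Floer-theoretic input directly for the highly degenerate $H$, and it does not go through as written. A generic nondegenerate approximation $G_\nu$ of $H$ is in general \emph{not} slow: near the (typically open) minimum level $H^{-1}(0)$ it acquires a large, uncontrolled cluster of nondegenerate one-periodic orbits, some possibly nonconstant, and the assertion that the subcomplex of orbits clustered near the minimum locus computes the ordinary homology of that locus with $\Phi(1)$ mapping to its fundamental class is precisely what needs to be proved, not a consequence of the PSS formalism -- in \cite{Oh2} establishing this requires a detailed chain-level comparison of Floer and Morse trajectories which is available only because $K$ is slow, flat, and Morse. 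You correctly flag, but do not resolve, the Novikov issue: for spherically irrational $\omega$ the recapped generators can interleave in action with the low-action cluster, and one must rule out cancellation of $\Phi(1)$ in the filtered complex. The parts of your proposal that do check out -- the reduction to $\min_M H=0$, the scaling remark, the Arzel\`a--Ascoli clustering of $\Sigma(G_\nu)$, and the observation that $\bar H=-H$ for autonomous $H$ -- are all correct (and the last even parallels Remark~\ref{minlength}), but they are scaffolding around an unproved core; the paper avoids that core precisely by engineering the approximation to preserve slowness so that Theorem~\ref{ohlength} applies.
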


\begin{proof}[Proof of Theorem \ref{main}, assuming Proposition \ref{aut}]  If $c< c_{HZ}^{\circ}(A,M)$, there is an autonomous Hamiltonian $H\in \mathcal{H}(A)$ whose Hamiltonian vector field $X_H$ has no nonconstant contractible periodic orbits of period at most $1$ and which satisfies $\max H\geq c$.  But then $X_{-H}$ also has no nonconstant contractible periodic orbits of period at most $1$ (since the periodic orbits of $X_H$ are obtained from those of $X_{-H}$ by time reversal), and we may apply Proposition \ref{aut} to conclude that $\rho(-H;1)=-\min_M (-H)=\max_M H\geq c$.  So Corollary \ref{dispcor} (applied to $-H$) shows that $c\leq e(A,M)$.\end{proof}

\begin{remark}\label{minlength}  Theorem III of \cite{Oh2}, together with Proposition \ref{aut} applied to both $H$ and $\bar{H}$, imply a conjecture of McDuff and Slimowitz \cite{MSlim}, namely that for any $H$ as in Proposition \ref{aut} the path $\phi_{H}^{t}$ minimizes the Hofer length within its homotopy class.  This conjecture was proven earlier in \cite{Sch}; it is not quite true that the proof arising from Proposition \ref{aut} is independent of Schlenk's proof, since our proof of Proposition \ref{aut} involves refining an argument that we learned from \cite{Sch}.  However, the principal ingredients in the respective proofs (Theorem 1.4 of \cite{MSlim} in the case of the proof in \cite{Sch}, and Theorem IV of \cite{Oh2} in our case) are established by rather different methods.
\end{remark}

The proof of Proposition \ref{aut} occupies the rest of this section.  The proposition generalizes (and its proof crucially uses) a result from \cite{Oh2}.  To state this result, we introduce some terminology, which is borrowed from \cite{Sch} (readers familiar with \cite{MSlim} should note that our definition of ``slow'' is different from the one there).

\begin{definition} \label{slowflat}  Let $(M,\omega)$ be a symplectic manifold, and let $H\co M\to\mathbb{R}$ be a smooth function, with Hamiltonian vector field $X_H$.  
\begin{itemize}\item[(i)] $H$ is called \emph{slow} if every contractible periodic orbit of $X_H$ having period at most $1$ is constant. \item[(ii)] $H$ is called \emph{flat} if, at every critical point $p$ of $H$, the linearized flow $(\phi_{t})_*\co T_pM\to T_p M$ has no nonconstant periodic orbits of period at most $1$.
\end{itemize}
\end{definition}

In this language, one has:
\begin{theorem}\label{ohlength}(\cite{Oh2}, Theorem IV)   If $H\co M\to\mathbb{R}$ is a slow, flat, Morse function, then \[ \rho(H;1)=-\min_M H.\]
\end{theorem}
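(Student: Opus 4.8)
The plan is to prove the equality by a rescaling argument, deducing the general case from the case of a $C^2$-small Hamiltonian. Adding a constant changes neither the Hamiltonian flow nor, by Theorem~\ref{ohbackground}(i), the validity of the conclusion, so I may assume $\min_M H=0$; then $H\geq 0$ and the claim becomes $\rho(H;1)=0$, of which the inequality $\rho(H;1)\leq 0$ is already contained in Theorem~\ref{ohbackground}(iii). Consider the family $H_s:=sH$ for $s\in(0,1]$. Since $X_{H_s}=sX_H$, the period-$\leq 1$ periodic orbits of $X_{H_s}$ are the period-$\leq s$ periodic orbits of $X_H$, and the linearized flow of $H_s$ at a critical point is that of $H$ reparametrized in time by the factor $s$; hence every $H_s$ is again slow, flat, and Morse. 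Slowness forces the contractible $1$-periodic orbits of $H_s$ to be exactly the constant orbits at the critical points of $H$, and flatness together with the Morse condition makes each of these nondegenerate, so each $H_s$ is a nondegenerate Hamiltonian. By Theorem~\ref{ohbackground}(ii), $s\mapsto\rho(H_s;1)$ is Lipschitz continuous and (since $H\geq 0$) non-increasing in $s$, and since $H_s\to 0$ in $C^0$ it converges to $\rho(0;1)=0$ as $s\to 0^+$.

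I would then pin down $\rho(H_s;1)$ as follows. Because the only contractible $1$-periodic orbits of $H_s$ are the constants at the critical points of $H$, its action spectrum is \[ \Sigma(H_s)=\{-sH(p)-\langle[\omega],A\rangle\ :\ p\in\mathrm{Crit}(H),\ A\in\Gamma_\omega\}, \] and by Theorem~\ref{ohbackground}(v) we have $\rho(H_s;1)\in\Sigma(H_s)$ for every $s$. For $s$ small, $H_s$ is $C^2$-small, so the standard reduction of Floer homology to Morse homology yields $\rho(H_s;1)=-\min_M H_s=0$. To propagate this to $s=1$, let $s_*$ be the supremum of those $\sigma\in(0,1]$ for which $\rho(H_s;1)=0$ holds on $(0,\sigma]$; by continuity $\rho(H_{s_*};1)=0$, and for $s$ near $s_*$ the value $\rho(H_s;1)$ is near $0$. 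Since $\mathrm{Crit}(H)$ is finite and, in the spectrum above, $\langle[\omega],A\rangle=-sH(p)-\rho(H_s;1)$ is then confined to a bounded subset of the discrete group $\langle[\omega],\Gamma_\omega\rangle$, the possible values of $\rho(H_s;1)$ for $s$ near $s_*$ lie on finitely many affine functions of $s$; continuity forces $\rho(H_\bullet;1)$ to coincide with a single one of them near $s_*$, and since that function vanishes on $(s_*-\epsilon,s_*)$ it vanishes identically. Hence $\rho(H_s;1)=0$ for $s$ slightly beyond $s_*$ as well, so $s_*=1$ and $\rho(H;1)=0=-\min_M H$.

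The main obstacle is that the propagation step uses the discreteness of $\langle[\omega],\Gamma_\omega\rangle$, i.e.\ it tacitly assumes $[\omega]$ is rational on $\pi_2(M)$; when $(M,\omega)$ is spherically irrational the spectrum $\Sigma(H_s)$ is dense in $\mathbb{R}$ and this finiteness argument collapses. Handling the general closed case would require either approximating $[\omega]$ by rational classes (and controlling the corresponding spectral invariants) or, closer to Oh's original approach in \cite{Oh2}, a direct chain-level analysis: one wants to exhibit a Floer cycle representing the quantum unit all of whose generators have action $\leq -\min_M H$, or rather to show that the generator given by a minimum of $H$ with its constant capping cannot be removed from any such cycle, the key geometric input being that a slow Hamiltonian supports no ``fast'' Floer trajectories that could cancel it. Turning that heuristic into genuine energy and action estimates---while keeping track of the fact that, although nondegenerate, $H$ may be arbitrarily large in $C^0$---is where I expect the real difficulty to lie.
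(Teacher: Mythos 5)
There is nothing in the paper to compare your argument with: Theorem \ref{ohlength} is not proven in this paper at all, but is imported as Theorem IV of \cite{Oh2}, where Oh establishes it by a direct chain-level analysis of Floer cycles representing the unit. The paper's own contribution is to reduce Proposition \ref{aut} to this cited result via the approximation Theorem \ref{approx}. So what matters is whether your sketch actually proves the statement, and it does not; you have correctly identified one of the gaps yourself, but there are others.

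Concretely: (a) your propagation step needs $\omega(\Gamma_\omega)$ to be discrete, i.e.\ spherical rationality, which is not a hypothesis of the theorem; as you say, when $(M,\omega)$ is spherically irrational the spectrum $\Sigma(H_s)$ is dense and the argument collapses, so at best you get a special case. (b) Even in the rational case, the step ``continuity forces $\rho(H_\bullet;1)$ to coincide with a single affine branch near $s_*$'' is not valid: a continuous function with values in a finite union of lines can switch branches at a crossing point, and $(s_*,0)$ can be such a crossing (there may be $p\in\mathrm{Crit}(H)$ and $A\in\Gamma_\omega$ with $s_*H(p)+\langle[\omega],A\rangle=0$, $H(p)\neq 0$, since $s_*$ is a supremum and cannot be chosen generically). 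Neither the Lipschitz bound $|\rho(H_s;1)-\rho(H_{s'};1)|\leq |s-s'|\max_M H$ nor the sign constraints rule out sliding onto the branch of slope $-H(p)$ beyond $s_*$. Note the contrast with the lemma inside Proposition \ref{kkbar} of the paper: there the spectrum of the deformed Hamiltonian is a fixed measure-zero set translated by an affine drift common to all its points, so subtracting the drift gives a continuous map into a fixed measure-zero set, which must be constant; in your family the drift $-sH(p)$ differs from critical point to critical point, and that trick is exactly what is unavailable. (c) Your base case, $\rho(H_s;1)=-\min_M H_s$ for $C^2$-small autonomous Morse functions, is itself a nontrivial assertion (one must exclude contributions of nonconstantly capped generators and quantum terms in the comparison with Morse theory); it is essentially a weak form of the very statement being proven, so invoking it as standard partially begs the question. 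In short, your last paragraph is the accurate assessment: a genuine proof requires Oh's chain-level energy and action estimates, and the paper deliberately treats that as a black box rather than reproving it.
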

(In fact, the result in \cite{Oh2} is somewhat more general than this, as it allows for the existence of orbits of period strictly less than $1$, both for the Hamiltonian flow $X_H$ and for the linearized flows at all of the critical points other than the global minimum and maximum, provided merely that these flows do not have periodic orbits of period exactly $1$.)

In order to generalize Theorem \ref{ohlength} we use the following approximation result:

\begin{theorem}\label{approx} Let $H\co M\to\mathbb{R}$ be any slow Hamiltonian with Hamiltonian vector field $X_H$ and  Hamiltonian flow $\{\phi_{H}^{t}\}_{0\leq t\leq 1}$.   Then if $\ep>0$ there is a smooth function $K\co M\to\mathbb{R}$ such that $\|K-H\|_{C^0}<\ep$,  and such that $K$ is a slow, flat, Morse function.
\end{theorem}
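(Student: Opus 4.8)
The idea is to change $H$ only on a small neighborhood of $\operatorname{Crit}(H)$, so that $K=H$ on the rest of $M$, where slowness is then automatic; on that neighborhood the perturbation must make the critical points nondegenerate and make the linearized Hamiltonian flows there slow. For the latter, note that if the Hessian $S$ of $K$ at a critical point has operator norm less than $2\pi$, then the linearization $A$ of $X_K$ there satisfies $\|A\|=\|S\|<2\pi$ (as $J$ is an isometry), so $e^{tA}$ can have no nonconstant periodic point of period $\le1$ — that would require an eigenvalue $2\pi ik/t$ of $A$ with $k\ne0$, of modulus $\ge 2\pi$ — and hence $K$ is flat. So it suffices to produce a $C^0$-small perturbation, supported near $\operatorname{Crit}(H)$, after which the function is Morse with all critical-point Hessians of norm $<2\pi$ and is still slow.

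For the construction I would work chart by chart near $\operatorname{Crit}(H)$ and proceed in two moves. First, on a small neighborhood $U$, replace $H$ by a function $f$ that agrees with $H$ near $\partial U$, differs from $H$ by a $C^\infty$-small amount, and is Morse; since the perturbation is $C^\infty$-small, the critical-point Hessians of $f$ stay bounded by $C:=\sup_{\text{near }\operatorname{Crit}(H)}\|\operatorname{Hess}H\|+1$. Second, set $K:=g\circ f$ where $g\co\mathbb R\to\mathbb R$ is an increasing diffeomorphism equal to the identity outside small intervals about the finitely many critical values of $f$ and with derivative a constant $\kappa<2\pi/C$ on those intervals; since $\operatorname{Hess}_p(g\circ f)=g'(f(p))\operatorname{Hess}_pf$, all Hessians of $K$ then have norm $<2\pi$, while $g$ can be taken $C^0$-close to the identity so that $\|K-H\|_{C^0}<\ep$, and $K$ is Morse with $\operatorname{Crit}(K)\subset U$. (Slowness of $H$ enters here only to keep $C$ finite in the relevant sense: a critical point with a ``deep narrow well'' — a large elliptic rotation rate — would already carry short contractible orbits, so such points are excluded, and the choice $\kappa<2\pi/C$ is available no matter how small $U$ is taken.) The awkwardness of gluing $g\circ f$ to $H$ across $\partial U$ without creating spurious critical points in the collar is dealt with by tuning the widths of the cutoffs against $\kappa$ and against the size of $U$.

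The substantive point, and the main obstacle, is that $K$ is still slow, which I would prove by contradiction and compactness. If not, take a sequence of such $K_n$ with perturbation-supports $U_n\downarrow\operatorname{Crit}(H)$ and $\|K_n-H\|_{C^0}\to0$, each carrying a nonconstant contractible periodic orbit $\gamma_n$ of period $T_n\le1$. If $\gamma_n$ misses $U_n$ it is a periodic orbit of $X_H$ of the same type, contradicting slowness of $H$. Otherwise $\gamma_n$ collapses onto $\operatorname{Crit}(H)$: on $U_n$ one has $X_{K_n}=(g'_n\circ f_n)X_{f_n}$ with $X_{f_n}$ vanishing on $\operatorname{Crit}(f_n)\approx\operatorname{Crit}(H)$, so $|X_{K_n}|=O(\operatorname{diam}U_n)$ there, while the arcs of $\gamma_n$ lying outside $U_n$ are arcs of $X_H$-orbits of time $\le T_n\le1$ issuing from $\partial U_n$, hence by the Lipschitz bound stay within $O(\operatorname{diam}U_n)$ of $\operatorname{Crit}(H)$; thus $\operatorname{diam}\gamma_n=O(\operatorname{diam}U_n)\to0$ and $\gamma_n$ accumulates on a single $p\in\operatorname{Crit}(H)$. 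For $n$ large, $\gamma_n$ then lies in the region where $K_n=g_n\circ f_n$, and since $X_{K_n}=(g'_n\circ f_n)X_{f_n}$ with $g'_n\le\kappa$ along $\gamma_n$, the curve $\gamma_n$ is a closed orbit of the $C^2$-bounded vector field $X_{f_n}$ of period at most $\kappa T_n\le\kappa$. But a Hamiltonian vector field with Hessians bounded by $C$ near its zero set has no closed orbit of period less than a positive constant depending only on $C$ (from $\operatorname{diam}\gamma_n\le(\text{period})\sup_{\gamma_n}|X_{f_n}|\le(\text{period})\,C\operatorname{diam}\gamma_n$, plus a linearization argument for loops small compared to their distance to $\operatorname{Crit}(f_n)$), so choosing $\kappa$ below that constant gives the contradiction.

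I expect the genuinely delicate parts to be, first, the collar gluing — arranging that $g\circ f$ has no new critical points there, which is subtle precisely when $H$ has degenerate or non-isolated critical points, and is the reason for working chart by chart — and, second, the bookkeeping in the compactness step, where $\kappa$, the diameters $\operatorname{diam}U_n$, and $C$ must be chosen so that the collapsing estimate and the period estimate apply to one and the same orbit $\gamma_n$. Note that slowness of $H$ is used in two distinct places: in the construction (bounding the relevant Hessians) and in the compactness argument (disposing of orbits that stray outside the perturbation region).
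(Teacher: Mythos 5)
Your proposal takes a genuinely different route from the paper's. You reverse the order of operations (Morse-perturb near $\mathrm{Crit}(H)$ first, then reparametrize by $g$), whereas the paper reparametrizes $H$ first (via $f\circ H$ with $f'$ small on intervals covering $H(F_N)$) and then takes a global $C^2$-small Morse approximation $K_m$. Your ordering has the pleasant side-effect of avoiding the Sard's-theorem step the paper needs (once $f$ is Morse, there are only finitely many critical values to compress). For slowness, the paper argues directly: it applies the Yorke $2\pi/L$ estimate to the cutoff field $\psi\,X_{f\circ H}$, shows $f\circ H$ is slow, and uses Arzel\`a--Ascoli to conclude that contractible short orbits of $X_{K_m}$ are trapped in $F_{2N}$ where $\psi\equiv 1$. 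You instead argue by contradiction with a sequence $U_n\downarrow\mathrm{Crit}(H)$. Both are reasonable, but your version has gaps that would need repair.

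The most concrete gap is the step ``$\operatorname{diam}\gamma_n=O(\operatorname{diam}U_n)\to 0$ and $\gamma_n$ accumulates on a single $p\in\mathrm{Crit}(H)$.'' If $\mathrm{Crit}(H)$ is not a finite set of points --- for a general slow $H$ it can contain submanifolds of positive dimension, or a Cantor-like set on a level --- then $U_n$ does not have small diameter and $\gamma_n$ can wander along $\mathrm{Crit}(H)$ while remaining close to it; neither ``$\operatorname{diam}\gamma_n\to 0$'' nor single-point accumulation holds. What is true, and what you actually need, is only that $\gamma_n$ stays in a shrinking Hausdorff neighborhood of $\mathrm{Crit}(H)$. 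Relatedly, your period lower bound for $X_{f_n}$ is really Yorke's theorem (a $C^1$ vector field with Lipschitz constant $L$ has no nonconstant periodic orbit of period $<2\pi/L$), and the sketch you give in its place (``$\operatorname{diam}\gamma_n\le(\text{period})\,C\operatorname{diam}\gamma_n$, plus a linearization argument'') is circular/vague for orbits that do not pass near $\mathrm{Crit}(f_n)$. Citing Yorke outright --- as the paper does --- fixes this cleanly and makes the accumulation-at-a-point step unnecessary.

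The other real issue is the ``collar gluing.'' You want $K=H$ outside a small neighborhood $U$ of $\mathrm{Crit}(H)$, but $g\circ f$ differs from $H$ on the entire preimage of the compressed intervals, which are level sets of $H$ extending far from $\mathrm{Crit}(H)$. Interpolating $g\circ f$ back to $H$ across a spatial collar needs an estimate ensuring no new critical points: writing the interpolation as $H+\psi\,(g\circ f - H)$ one must control $|\nabla\psi|\cdot\|g-\mathrm{id}\|_{C^0}$ against the lower bound on $|\nabla H|$ in the collar, and you only gesture at this (``tuning the widths''). This is manageable but nontrivial, and the paper sidesteps it entirely by never insisting that the approximant agree with $H$ off a small set --- the reparametrized $f\circ H$ and its Morse approximations $K_m$ are just globally $C^0$-close to $H$, which is all the theorem asks for. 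If you drop the requirement $K=H$ outside $U$ and set $K=g\circ f$ globally, the gluing problem vanishes and your argument (with the two corrections above) becomes much cleaner.
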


Except for the requirement that $K$ be Morse, this was essentially proven as part of the proof of Theorem 1.3 in \cite{Sch}; however the functions produced in \cite{Sch} have nonempty open sets consisting of critical points, and so are far from being Morse.  

\begin{proof}

Choose a background Riemannian metric on $M$, induced by an almost complex structure compatible with $\omega$.  
Let \[ F_n=\{p\in M||\nabla H(p)|\leq 1/n\}.\]  Note that if $s\in \cap_{n=1}^{\infty}H(F_n)$, then there are $p_n\in M$ such that $H(p_n)= s$ and $|\nabla H(p_n)|\leq 1/n$, so the compactness of $M$ and the continuity of $H$ and $\nabla H$ show that there is $p\in M$ with $H(p)=s$ and $\nabla H(p)=0$.  Thus $\cap_{n=1}^{\infty}H(F_n)$ is precisely the set of critical values of $H$, and so has Lebesgue measure zero by Sard's theorem.  So since the image of $H$ has finite Lebesgue measure, we have \[ m_{Leb}(H(F_n))\to 0 \mbox{ as }n\to\infty.\]

Let $\zeta>0$ be given.  There is then $N$  such that $m_{Leb}(H(F_N))<\zeta/2$.  Choose a smooth function $\psi\co M\to\mathbb{R}$ such that $0\leq \psi\leq 1$, $\psi|_{F_{2N}}=1$, and $supp(\psi)\subset F_N$

  Now (using that $F_N$ and hence $H(F_N)$ are compact) there are open intervals $I_1,\ldots,I_k$ with disjoint closures such that $H(F_N)\subset \cup_{j=1}^{k} I_k $ and $\sum_{j=1}^{k}m_{Leb}(I_j)<3\zeta/4$.  Choose open intervals $I'_1,\ldots I'_k$ with disjoint closures such that $\bar{I_j}\subset I'_j$ for each $j$ and $\sum_{j=1}^{k}m_{Leb}(I'_j)<\zeta$.  Write $S=\cup_{j=1}^{k}I_j$, $S'=\cup_{j=1}^{k}I'_j$.

  Write \[ B=\sup_{p\in M}|\nabla(\nabla H)(p)|\]  and let $\eta>0$ be a small number ($\eta$ will be further specified later; in particular it should be smaller than $1$).  Let $f\co [\min H,\max H]\to [\min H,\max H]$ be a $C^{\infty}$ function satisfying the following properties:
\begin{itemize}\item[(i)] $f(\min H)=\min H$
\item[(ii)] $0<f'(s)\leq 1$ for all $s$ 
\item[(iii)] $f'(s)=1$ if $s\notin S'$
\item[(iv)] $f'(s)=\frac{\eta}{2}$ if $s\in S$\end{itemize}

So since $m_{Leb}(S')<\zeta$, we have $0\leq s-f(s)< \zeta$ for all $s\in [\min H,\max H]$.  In particular, $\|f\circ H-H\|_{C^0}<\zeta$, independently of $\eta$.

If $q\in F_N$ (which in particular implies $f''(H(q))=0$ by the construction of $f$) one has \[ |\nabla(f\circ H)|=f'(H(q))|\nabla H(q)|\leq \frac{\eta}{2N} \] and \[ |\nabla(\nabla(f\circ H))(q)|=\frac{\eta}{2}|\nabla(\nabla H)(q)|\leq \frac{B\eta}{2}.\]  

Hence we have \[ \|X_{f\circ H}\|_{C^1(F_N)}\leq \frac{\eta}{2}+\frac{B\eta}{2}.\]  So recalling our above function $\psi\co M\to\mathbb{R}$ (which is supported in $F_{N}$), we obtain \[ \|\psi X_{f\circ H}\|_{C^1}\leq \left(\frac{\eta}{2}+\frac{B\eta}{2}\right)\|\psi\|_{C^1}.\]  By the Yorke estimate \cite{Y} (and the Whitney embedding theorem), there is $\eta_0>0$ such that  any vector field $V$ on $M$ with $\|V\|_{C^1}\leq \eta_0(B+1)\|\psi\|_{C^1}$ will have no nonconstant periodic orbits of period at most one.  So take $0<\eta<\eta_0$ and let $\{K_m\}_{m=1}^{\infty}$ be a sequence of Morse functions on $M$ such that $K_m\to f\circ H$ in $C^2$-norm.  Then $X_{K_m}\to X_{f\circ H}$ in $C^1$-norm, and so $\psi X_{K_m}\to \psi X_{f\circ H}$ in $C^1$-norm.  Hence, for $m$ large enough that \[ \|\psi X_{f\circ H}-\psi X_{K_m}\|_{C^1}<\left(\frac{(B+1)\eta}{2}\right)\|\psi\|_{C^1},\] the vector field $\psi X_{K_m}$ has no nonconstant periodic orbits of period at most one. 

Now since $H$ is slow and since  $0< f'\leq 1$, it quickly follows that $f\circ H$ is slow.  Thus all contractible periodic orbits of $X_{f\circ H}$ of period at most one   are constant orbits at critical points of $f\circ H$; in particular all such orbits are contained in the interior of $F_{2N}$.  An easy application of the Arzel\`a--Ascoli theorem then shows that, if $m$ is sufficiently large, any contractible periodic orbit of period at most one of $X_{K_m}$ must be contained in $F_{2N}$.  So (since $\psi|_{F_{2N}}=1$) for $m$ sufficiently large the only contractible periodic orbits of $X_{K_m}$ with period at most one are in fact periodic orbits of $\psi X_{K_m}$ and hence are constant by what we've already shown.

Thus $K_m$ is slow for $m$ sufficiently large.  Further, the fact that $K_m\to f\circ H$ in $C^2$ shows that all critical points $p$ of $K_m$ are, for $m$ large enough, contained in $F_N$, and at any such $p$ we have $|\nabla(\nabla (f\circ H))(p)|<B\eta/2$ and therefore $|\nabla(\nabla K_m)(p)|<B\eta$ for large enough $m$.  So (as long as $\eta$ has been taken smaller than $2\pi/B$) the linearized flow $\phi_{K_m}^{t}$ at any such critical point will have no nonconstant periodic orbits of period at most $1$.

Thus, for sufficiently large $m_0$, $K_{m_0}$ is a slow, flat, Morse function with $\|K_{m_0}-f\circ H\|_{C^2}<\zeta$.  Since $\|f\circ H-H\|_{C^0}<\zeta$, this proves the theorem (setting $\zeta=\ep/2$ and $K=K_{m_0}$). 

\end{proof}

Proposition \ref{aut} is now a quick consequence of Theorems \ref{ohlength} and \ref{approx}.  Indeed, if $H$ is as in Theorem \ref{aut} and if $\ep>0$, use Theorem \ref{approx} to find a slow, flat, Morse function $K$ with $\|K-H\|_{C^0}< \ep$.  By Theorem \ref{ohlength}, $\rho(K;1)=-\min_M K$.  Theorem \ref{ohbackground}(ii) shows that $|\rho(H;1)-\rho(K;1)|\leq \|H-K\|_{C^0}<\ep$, and of course one has $|\min_M H-\min_M K|\leq \|H-K\|_{C^0}< \ep$, and so we have \[ |\rho(H;1)+\min_M H|<2\ep.\]  $\ep>0$ was arbitrary, so this proves Proposition \ref{aut} and hence also Theorem \ref{main}.

\end{document}